\newtheorem{theo}{Theorem}[section]
\newtheorem{prop}[theo]{Proposition}
\newtheorem{lemm}[theo]{Lemma}
\newtheorem*{claim}{Claim}
\theoremstyle{definition}
\newtheorem{defi}[theo]{Definition}
\theoremstyle{remark}
\newtheorem{rema}[theo]{Remark}
\newcommand{\bb}[1]{\mathbb{#1}}
\newcommand{\al}[1]{\mathcal{#1}}
\newcommand{\sr}[1]{\mathscr{#1}}
\newcommand{\ak}[1]{\mathfrak{#1}}
\newcommand{\gen}[1]{\left\langle #1\right\rangle}
\newcommand{\sm}{\smallsetminus}
\newcommand{\bs}{\backslash}
\newcommand{\ra}{\rightarrow}
\newcommand{\lra}{\longrightarrow}
\newcommand{\x}[1]{\text{#1}}
\newcommand{\xb}[1]{\text{\textbf{#1}}}
\begin{document}
\title[Anti-invariant  bundles and twisted conformal blocks]{Moduli spaces of anti-invariant vector bundles and twisted conformal blocks}
\author{Hacen ZELACI}
\address{Mathematical Institute of the University of Bonn}
\curraddr{Endenicher Allee 60, Bonn, Germany.}
\email{z.hacen@gmail.com}
\date{\today}

\begin{abstract}
We prove a canonical identifications between the spaces of generalized theta functions on the moduli spaces of anti-invariant  vector bundles in the ramified case and the conformal blocks associated to twisted Kac-Moody affine algebras. We also show a strange duality on level one in the unramiffied case, this gives the dimensions of the spaces of generalized theta functions of level one.
\end{abstract}
\maketitle
\tableofcontents

\section{Introduction}
Consider a smooth projective complex curve $X$ of genus $g_X\geqslant2$ with an involution $\sigma$ and assume that the fixed locus of $\sigma$ is not empty and contains $2n$ points. Let $\pi:X\ra Y=X/\sigma$ the associated double cover and denote $R$ the ramification locus and $B=\pi(R)$.\\ 
 An anti-invariant  vector bundle over $X$ is a vector bundle $E$ that has an isomorphism $\psi:\sigma^*E\ra E^*$. If the this isomorphism verifies $\sigma^*\psi=\,^t\psi$ then $E$ is called $\sigma-$symmetric, and  if it verifies  $\sigma^*\psi=-\,^t\psi$ then  it is called $\sigma-$alternating, in this case the rank has to be  even. If $E$ is stable then $\psi$ is necessarily $\sigma-$symmetric or $\sigma-$alternating. \\
 We constructed the moduli spaces of such vector bundles in \cite{Z2}, and in \cite{Z}, we showed that the locus $\al {SU}_X^{\sigma,+}(r)$ of stable $\sigma-$symmetric anti-invariant vector bundles with trivial determinant over $X$ is irreducible.  We also proved that  the locus $\al{SU}_X^{\sigma,-}(r)$ (for even rank $r$) of stable $\sigma-$alternating vector bundles with trivial determinant has $2^{2n-1}$ connected components indexed by some types $\tau=(\tau_p)_{p\in R}\mod \pm 1$, where $\tau_p\in \{\pm1\}$ is the \emph{Pfaffian} of the $\sigma-$alternating isomorphism $\psi:\sigma^*E\ra E^*$ over $p$.  

The main topic of this paper is the study of generalized theta functions on the moduli stacks $\sr{SU}_X^{\sigma,+}(r)$ (resp. $\sr{SU}_X^{\sigma,\tau}(r)$) of $\sigma-$symmetric (resp. $\sigma-$alternating of type $\tau$) anti-invariant vector bundles $(E,\delta,\psi)$ over $X$, where $\delta$ is a trivialisation of $\x{det}(E)$ and $\psi:\sigma^*E\cong E^*$ is $\sigma-$symmetric (resp. $\sigma-$alternating of type $\tau$) compatible with $\delta$. 
When the type is $\tau$ trivial, that's $\tau_i=+1$ for all $i$, the corresponding moduli stack is denoted simply by $\sr{SU}_X^{\sigma,-}(r)$.

These moduli stacks correspond to moduli stacks  $\sr M_Y(\al G)$ of parahoric $\al G-$torsors over the quotient curve $Y$, for some \emph{twisted} parahoric Bruhat-Tits  group schemes $\al G$. Parahoric  $\al G-$torsors have attracted the attention of many mathematician recently (see \cite{PR}, \cite{He}, \cite{BS}, \cite{BKV}), since they can be considered as a generalization of parabolic $G-$bundles. Their moduli spaces has been constructed in the untwisted case by Balaji and Seshadri \cite{BS}. For the twisted case, we have constructed their moduli spaces in type $A_n$ in \cite{Z2}.\\ 

The restriction of the determinant bundle $\al D$ over the moduli stack of vector bundles of trivial determinant $\sr{SU}_X(r)$ to the moduli stack $\sr{SU}_X^{\sigma,-}(r)$  turns out to have a square root for each $\sigma-$invariant theta characteristic, we call them  the \emph{Pfaffian of cohomology line bundles}. However this is not true for the $\sigma-$symmetric case. \\
In the \'etale case, the two stacks are isomorphic (see \cite{Z}). In this case also, the determinant bundle has a square root. However, in this paper we stick to the ramified case.\\

Our main result is the identification of the space of global sections of the powers of a Pfaffian line bundle $\al P$ and the determinant line bundle $\al D$ (called space of generalized theta functions) with the \emph{conformal blocks} $\al V_{\sigma, \pm}(k)$ associated to the twisted universal central extension of $\ak{sl}_r$ (see section \ref{sec2} for a precise definition).
\begin{theo} \label{main1}
	Let $\al P$ be a Pfaffian line bundle over $\sr{SU}_X^{\sigma,-}(r)$ and $\al D$ be the determinant line bundle over $\sr{SU}_X^{\sigma,+}$, and let $k\in \bb N$. Then we have canonical isomorphisms $$H^0(\sr{SU}_X^{\sigma,-}(r), \al P^k)\cong \al V_{\sigma,-}(k),$$  $$H^0(\sr{SU}_X^{\sigma,+}(r), \al D^k)\cong \al V_{\sigma,+}(k).$$  \\
\end{theo}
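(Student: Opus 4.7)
The plan is to follow the Beauville--Laszlo--Faltings--Kumar--Laszlo--Sorger strategy, adapted to the twisted parahoric setting. The argument has three stages: a uniformization of the moduli stacks as quotients of a twisted affine flag variety, a Borel--Weil identification of sections of $\al P^k$ and $\al D^k$ on each local factor with integrable representations of a twisted affine Kac--Moody algebra, and finally a descent step identifying global sections with invariants, which one matches with the definition of $\al V_{\sigma,\pm}(k)$.

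First, I would establish a uniformization theorem: the stack $\sr{SU}_X^{\sigma,\pm}(r)$ (resp. $\sr{SU}_X^{\sigma,\tau}(r)$) is the double quotient of a product of twisted affine Grassmannians $\prod_{b\in B}\al Q_b$, one per ramification point, by the algebraic loop group $L^{out}\al G^{\pm}$ of maps $Y\sm B \to \al G^{\pm}$, where $\al G^{\pm}$ is the parahoric Bruhat--Tits group scheme on $Y$ attached to $\sigma$ and to the symmetric/alternating choice (and, in the alternating case, to the type $\tau$). The author's construction of moduli of twisted parahoric torsors in type $A_n$ in \cite{Z2} already supplies the key local pieces; the global descent is a standard Heinloth-style argument once one verifies local triviality of $\sigma$-anti-invariant bundles in the fpqc topology on $Y$, which follows from a Cartan decomposition in the twisted loop group.

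Next, I would pull back $\al P^k$, respectively $\al D^k$, to the twisted affine flag variety via the uniformization. A compatibility computation should identify the pullback with $\al L^{\otimes k}$, where $\al L$ is the ample generator of $\x{Pic}(\al Q_b)$, i.e. the basic level-one line bundle of the twisted affine Kac--Moody algebra associated to the $\sigma$-fixed subalgebra of $\ak{sl}_r$ (so $\ak{so}_r$ in the symmetric case, $\ak{sp}_r$ in the alternating case). The square-root nature of $\al P$ is crucial here: the naive pullback of the determinant yields $\al L^{\otimes 2}$ on the alternating locus, and extracting the Pfaffian makes the central charge come out to $k$ rather than $2k$. By the Kumar--Mathieu theorem for twisted affine Kac--Moody algebras, $H^0(\al Q_b,\al L^{\otimes k})$ is then canonically the restricted dual of an integrable highest weight module $\al H_b(k)$ at central charge $k$, whose highest weight is prescribed by the local parahoric datum at $b$ (in the alternating case this depends on the sign $\tau_b$). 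By descent along the uniformization and a standard flat-sheaf argument, global sections over the stack equal the $L^{out}\al G^{\pm}$-invariants in $\bigotimes_{b\in B}\al H_b(k)$, which by Garland-type density coincides with the coinvariants of the current algebra $\Gamma(Y\sm B,\x{Lie}\,\al G^{\pm})$ acting via Laurent expansions at each $b$; this is precisely the definition of the twisted conformal block $\al V_{\sigma,\pm}(k)$ from Section \ref{sec2}.

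The hardest step will be the level computation inside the uniformization, particularly for the alternating components indexed by a nontrivial type $\tau$. One must carefully normalize the invariant form on the fixed subalgebra, track the dual Coxeter number of the twisted affine algebra, and prove that the Pfaffian line bundle restricts to the basic level-one bundle and not some multiple of it. A related delicate point is that ramification points $b$ with $\tau_b=-1$ force the local representation $\al H_b(k)$ to be a nontrivial minuscule integrable module rather than the vacuum; checking that this matches what the conformal block side prescribes, and that no spurious sign or shift enters the pullback of $\al P$ between different connected components of $\sr{SU}_X^{\sigma,-}(r)$, is the most demanding bookkeeping in the proof.
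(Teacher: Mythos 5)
Your proposal follows essentially the same route as the paper: uniformization of $\sr{SU}_X^{\sigma,\pm}(r)$ over the ramification points as a quotient of a product of twisted affine Grassmannians by $\xb{SL}_r(\sr A_R)^{\sigma^\pm}$, the Kumar--Mathieu theorem identifying sections of powers of $\al P$ (resp.\ $\al D$) on each local factor with duals of the basic level-$k$ integrable representations of $\widehat{\al L}(\ak{sl}_r,\sigma^\pm)$ (with the same level bookkeeping forcing the Pfaffian in the alternating case), and a descent step equating global sections with $\ak{sl}_r(\sr A_R)^{\sigma^\pm}$-invariants, hence with the conformal block. The only divergence is that the paper sidesteps your concern about nontrivial types $\tau$ by invoking the isomorphism of all type components and working with the trivial type throughout.
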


Using the results of Heinloth \cite{He}, describe the Picard group of this moduli stack. In fact it is infinite cyclic group whose generator is the Pfaffian bundle.
\begin{theo}
	Let $\al P$ be the Pfaffian bundle over $\sr{SU}_X^{\sigma,-}(r)$, then $\x{Pic}(\sr{SU}_X^{\sigma,+}(r))=\bb Z\al P.$
\end{theo}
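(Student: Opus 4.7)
The plan is to apply Heinloth's computation of the Picard group of moduli stacks of parahoric Bruhat--Tits torsors, taking as input the identification $\sr{SU}_X^{\sigma,-}(r)\cong \sr M_Y(\al G)$ with a moduli stack of parahoric $\al G$-torsors on $Y$ for the appropriate twisted simply connected Bruhat--Tits group scheme $\al G$ (whose generic fiber is a twisted form of $SL_r$, and whose parahoric structure at each branch point $p\in B$ encodes the $\sigma$-alternating datum). Throughout, I read the statement as computing $\x{Pic}(\sr{SU}_X^{\sigma,-}(r))$, since $\al P$ lives on that stack.

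First I would invoke Heinloth's theorem: for a parahoric group scheme with simply connected generic fiber, $\x{Pic}(\sr M_Y(\al G))$ is a free abelian group described in terms of characters of the parahorics at the marked points modulo the image of global characters of $\al G$, together with one ``central charge'' coordinate coming from the affine Kac--Moody central extension. In our situation the generic fiber is simply connected, and the parahorics at the branch points are the maximal special parahorics corresponding to the $\sigma$-alternating Pfaffian decoration; their Levi quotients have no nontrivial central characters which do not already extend globally. This yields $\x{Pic}(\sr M_Y(\al G))\cong \bb Z$, with the generator picked out by the central charge functional.

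Second, I would identify the generator explicitly with $\al P$. The determinant-of-cohomology bundle $\al D$ on $\sr{SU}_X(r)$ pulls back to a line bundle on $\sr{SU}_X^{\sigma,-}(r)$, and by construction $\al P^{2}\cong \al D$. Under Heinloth's isomorphism, $\al D$ corresponds to a line bundle whose central charge is computable from the Dynkin index of the natural embedding of $\al G$ into the relevant larger group scheme coming from the forgetful map to $\sr{SU}_X(r)$; this Dynkin index is $2$ in the normalization in which the generator of $\x{Pic}(\sr M_Y(\al G))$ has central charge $1$. Consequently $\al P$ itself has central charge $1$, i.e.\ $\al P$ is primitive and therefore generates.

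The main obstacle is this second step: matching normalizations between Heinloth's presentation of $\x{Pic}$ and the twisted affine Kac--Moody conventions used in Theorem \ref{main1}, so as to conclude that $\al D$ has central charge exactly $2$ and not $2m$ for some $m>1$. This requires evaluating the invariant bilinear form of the twisted affine algebra on the cocharacter of the central element produced by the parahoric at each branch point, and checking that the $\sigma$-alternating (as opposed to $\sigma$-symmetric) decoration is precisely what halves the lattice and makes $\al P$ integral. Once this computation is carried out in parallel with the Pfaffian construction recalled in the introduction, the asserted equality $\x{Pic}(\sr{SU}_X^{\sigma,-}(r))=\bb Z\al P$ follows.
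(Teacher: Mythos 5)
Your first step reproduces the paper's own argument for $\x{Pic}(\sr{SU}_X^{\sigma,-}(r))\cong\bb Z$: both you and the paper feed the identification $\sr{SU}_X^{\sigma,\pm}(r)\cong\sr M_Y(\al G)$ into Heinloth's exact sequence $0\ra \Pi_{p\in B}X^*(\al G_p)\ra \x{Pic}(\sr M_Y(\al G))\ra \bb Z\ra 0$ and kill the local character groups. The paper is more concrete where you are vague: it quotes the explicit structure $0\ra \x{Sym}_r^0(\bb C)\ra \al G_p\ra \x{SO}_r(\bb C)\ra 0$ of the special fibre of the parahoric (from \cite[Proposition $2.3$]{Z2}) and observes that any character dies on the semisimple quotient and on the unipotent kernel, hence $X^*(\al G_p)=0$. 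Your phrase ``no nontrivial central characters which do not already extend globally'' should be replaced by this concrete verification, but the idea is the same. (Your reading of the statement as concerning $\x{Pic}(\sr{SU}_X^{\sigma,-}(r))$ is the right one; the displayed theorem has a typo, and the body of the paper, Proposition \ref{picardgroup}, states both cases.)

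Where you genuinely diverge is in identifying the generator, and this is also where your proposal has a real gap. The paper's argument is geometric: it shows $\al D$ (resp.\ $\al P$) admits no root at all by using the Hitchin system to dominate the stack by the intersection of Prym varieties $\tilde{\x{P}}\cap\tilde{\x{Q}}$, whose polarization has no square root; a root of the line bundle upstairs would produce one downstairs. You instead propose to show $\al P$ has central charge $1$ under Heinloth's map, via a Dynkin-index computation for the embedding into $\xb{SL}_r$. That route is legitimate in principle, but you explicitly defer the decisive computation --- that the central charge of $\al D$ is exactly $2$ and not $2m$ --- and flag it as ``the main obstacle.'' Since the entire content of the primitivity claim is concentrated in that normalization, the proposal as written does not prove the theorem. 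Note, however, that the missing input is essentially supplied elsewhere in the paper: Proposition \ref{squareroot} shows, via the passage to $\x{Spin}_{2rN}$, that the restriction of $\al O_{\sr Q_N}(1)$ to the isotropic Grassmannian $\sr Q_N^{\sigma,-}$ has a square root which \emph{generates} $\x{Pic}(\sr Q_N^{\sigma,-})$; combined with the identification $q^*\al P\cong\al L_{\chi_-}$ this says precisely that $\al P$ restricts to the generator on the local uniformization, i.e.\ has central charge $1$. If you route your second step through that proposition rather than through an unspecified bilinear-form evaluation, your argument closes and becomes a genuine alternative to the Prym-variety argument --- arguably a cleaner one, since it avoids the Hitchin-system input from \cite{Z}.
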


\subsection*{Acknowledgement}
I would like to thank Christian PAULY for the useful discussions and for his continuous support. This work  has been established during the preparation of my thesis at J.A. Dieudonn\'e Laboratory, University of Nice-Sophia Antipolis. \\

\section{Preliminaries on twisted Kac-Moody algebras} \label{sec2}
In this first section, we recall briefly the construction of the  twisted affine Kac-Moody Lie algebras and the attached conformal blocks. We use notations of \cite{K}. The definition of twisted conformal blocks is adapted from  \cite{FS}, where a more general definition is given in the framework of vertex algebras.

Consider an outer automorphism $\tau$ of the Lie algebra $\ak{sl}_r(\bb C)$. It is an order two automorphism. The involution $\tau$ is extended to an automorphism of the affine Kac-Moody algebra $\widehat{\al L}(\ak{sl}_r)=\ak{sl}_r(\sr K)\oplus \bb CK$, where $\sr K=\bb C((t))$ and $K$ a central element,  by sending $x\otimes g(t)$ to $\tau(x)\otimes g(-t)$ and fixes the center. Then the fixed subalgebra of this involution, denoted by $\widehat{\al L}(\ak{sl}_r,\tau)$, is an affine Lie algebra of type $A^{(2)}_{l}$ (after adding a scaling element $D$), where $l=\lfloor r/2\rfloor $, and it is called \emph{twisted} affine Lie algebra. %Denote by $K'=\dfrac{1}{2}K$, then its Lie  bracket is given by $$[a(t),b(t)] =[a(t),b(t)]_0+\x{Res}(\x{Tr}(\dfrac{da}{dt}b))K'$$ \\
Let $\mathring{\ak g}$ be the finite simple Lie algebra of $\al{L}(\ak{sl}_r,\tau)$ (see \cite[\S6.3]{K} for a precise definition). Then $\mathring{\ak g}$ is of type  $C_{l}$ if $r$ is odd, and is isomorphic to the fixed subalgebra  $\ak{sl}_r(\bb C)^{\tau}$ if $r$ is even.

Since we will be interested mainly in the following two involutions $$\sigma^+(a(t))=-\,^ta(-t), \;\; \sigma^-(a(t))=-J_r\,^ta(-t)J_r^{-1},$$ where $$J_r=\begin{pmatrix}
0 & I_{r/2} \\ -I_{r/2} & 0
\end{pmatrix},$$ we give an explicit constructions  of $\al{L}(\ak{sl}_r,\sigma^\pm)$. %We see that in the even case, $\mathring{\ak g}$ is isomorphic to $\ak{so}_r$ and $\mathring{\ak g}$ is isomorphic to $\ak{sp}_r$.

Let $M_{i,j}$ be the canonical basis of the vector space of square matrices of size $r$. Let $\ak{h}\subset\ak{sl}_r(\bb C)$ be the Cartan subalgebra  of diagonal matrices  and let $\alpha'_1,\cdots,\alpha'_{r-1}\in \ak{h}^*$ be the simple roots defined by $\alpha_i=M_{i,i}^*-M_{i+1,i+1}^*$. Denote by  $E'_1,\cdots,E'_{r-1}$ and $F'_1,\cdots,F'_{r-1}$  the Chevalley generators of $\ak{sl}_r(\bb C)$: $E'_i=M_{i,i+1}$, $F'_i=-\,^tE'_i$.  Let  ${\alpha'_0}^\vee=M_{1,1}-M_{r,r}$, $E'_0=M_{1,r}$ and $F'_0=-\,^tE'_0$. Then the Chevalley generators of $\widehat{\al{L}}(\ak{sl}_r)$ are given by $$e_0=t\otimes E'_0,\;\; f_0=t^{-1}\otimes F'_0,$$ and for $i\in\{1,\cdots,r-1\}$ $$e_i=1\otimes E'_i,\;\; f_i=1\otimes F'_i. $$ 
Recall the Lie bracket on $\hat{\al L}(\ak{sl}_r)$ is given by 
$$[g(t),h(t)]=[g,h]\otimes P(t)Q(t)+(g,h)\x{Res}(\frac{dP}{dt}Q)K,$$
where $g,h\in \ak{sl}_r$, $P,Q\in\sr K$ and $(\,,\,)$ is the normalized Killing form on $\ak{sl}_r$.\\
Moreover, by extending the linear forms $\alpha_i$ to $\ak{h}\oplus\bb CK$ such that $\alpha_i(K)=0$, then $\alpha_i$ are the simple roots of $\widehat{\al{L}}(\ak{sl}_r)$.\\

{\bfseries Case $\al L(\ak{sl}_r,\sigma^-)$.} Let $r=2l$.  This is the algebra constructed in \cite[Page $128$]{K}.  We can assume, after conjugation, that $\sigma^-$ sends $E'_i$ to $E'_{r-i}$, $F'_i\ra F'_{r-i}$ and  $\alpha'_i\ra \alpha'_{r-i}$.  So let's define 
\begin{itemize} 	
	\item $\alpha_i^\vee= {\alpha'}_i^\vee+{\alpha'}_{r-i}^\vee$ ($1\leqslant i \leqslant l-1$), $\alpha^\vee_l={\alpha'}_l^\vee$ and $\alpha^\vee_0=-2{\alpha'}_0^\vee+{\alpha'}_1^\vee + {\alpha'}_{r-1}^\vee$. 
	\item $E_i=E'_i+E'_{r-i}$ ($1\leqslant i \leqslant l-1$), $E_l=E'_l$ and $E_0=E'_{-\alpha'_0+\alpha'_{r-1}}-E'_{-\alpha'_0+\alpha'_{1}}$. 
	\item $F_i=F'_i+F'_{r-i}$ ($1\leqslant i \leqslant l-1$), $F_l=F'_l$ and $F_0=-E'_{\alpha'_0-\alpha'_{r-1}}+E'_{\alpha'_0-\alpha'_1}$.    
\end{itemize} 
The Chevalley generators of $\widehat{\al L}(\ak{sl}_r,\sigma^-)$ are given by $$e_i=1\otimes E_i,\;\;f_i=1\otimes F_i\;\;\;\x{for }i=1,\dots,l. $$ $$e_0=t\otimes E_0,\;\; f_0=t^{-1}\otimes F_0. $$
Consider the elements  $\tilde{\alpha}_i^\vee=2\alpha_i^\vee/(\alpha_i^\vee,\alpha_i^\vee)\in \ak{h}$. Since the normalized bilinear form $(\,;\,)$ is non-degenerate on $\ak{h}$ it induces an isomorphism $\ak{h}\cong \ak{h}^*$. So let $\tilde{\alpha}_i$ be the images of $\tilde{\alpha}_i^\vee$ under this bijection.  Then the simple roots  of $\widehat{\al L}(\ak{sl}_r,\sigma^-)$ are given by $$\alpha_0=\frac{1}{2}\otimes \tilde{\alpha}_0, $$ $$\alpha_i=1\otimes \tilde{\alpha}_i,\;\; i=1,\dots,l.$$
The simple  coroots  are just $1\otimes\alpha_i^\vee$, for   $i=1,\dots,l.$ We denote them again by $\alpha_i^\vee.$ For $i=0$ the simple coroot is $2K+1\otimes \alpha_0^\vee.$ We denote it also by $\alpha_0^\vee$. \\
In particular, the normalized bilinear form on $\widehat{\al L}(\ak{sl}_r,\sigma^-)$ is given by $$(P\otimes x;Q\otimes y)=\dfrac{1}{2}\x{Res}(t^{-1}PQ)(x;y),$$
where $(\;,\;)$ is the normalized bilinear form on $\ak{sl}_r(\bb C)$. The $2-$cocycle on $\al{L}(\ak{sl}_r,\sigma^-)$ that defines $\widehat{\al L}(\ak{sl}_r,\sigma^-)$ is given by $$\psi(g(t),h(t))=\dfrac{1}{2}\x{Res}(\x{Tr}(\dfrac{dg}{dt}h)).$$ \\

{\bfseries Case $\al L(\ak{sl}_r,\sigma^+)$.} We treat the case  $r=2l$ (the odd case is again treated in \cite{K}).  We can assume, after conjugation, that $\sigma^+$ sends $E'_i$ to $-E'_{r-i}$, $F'_i\ra -F'_{r-i}$ and $\alpha'_i\ra \alpha'_{r-i}$. So we define the following elements of $\ak{sl}_{2l}$ :
\begin{itemize} 	
	\item $\beta_i^\vee= {\alpha'}_{l-i}^\vee+{\alpha'}_{l+i}^\vee$ ($1\leqslant i \leqslant l-1$), $\beta^\vee_l={\alpha'}_0^\vee$ and $\beta^\vee_0=2{\alpha'}_{l}^\vee+{\alpha'}_{l-1}^\vee+{\alpha'}_{l+1}^\vee$. 
	\item $E_i=E'_i-E'_{r-i}$ ($1\leqslant i \leqslant l-1$), $E_l=E'_0$ and $E_0=E'_{\alpha'_l+\alpha'_{l+1}}-E'_{\alpha'_{l}+\alpha'_{l-1}}$. 
	\item $F_i=F'_i-F'_{r-i}$ ($1\leqslant i \leqslant l-1$), $F_l=F'_0$ and $F_0=-E'_{-\alpha'_{l}-\alpha'_{l+1}}+E'_{-\alpha'_{l}-\alpha'_{l-1}}$.    
\end{itemize} 
Remark that the affine node $\beta_0$ of $\widehat{\al{L}}(\ak{sl}_r,\sigma^+)$ is then the node $\alpha_l$ with the notation of Table Aff\,2 of \cite[Page $55$]{K}. Thus when deleting this node the remaining diagram is of type $D_l$.\\  

As before, we define the Chevalley generators of $\widehat{\al L}(\ak{sl}_r,\sigma^+)$  by $$e_i=1\otimes E_i,\;\;f_i=1\otimes F_i\;\;\;\x{for }i=1,\dots,l. $$ $$e_0=t\otimes E_0,\;\; f_0=t^{-1}\otimes F_0. $$
The simple coroots of the simple invariant Lie algebra ($=\ak{so}_{2l}$) are given  by $$\tilde{\beta}_i^\vee=2\beta_i^\vee/(\beta_i^\vee,\beta_i^\vee),\;\; i=0,\dots,l.$$ As above denote by $\tilde{\beta}_i$ the corresponding elements of $\ak{h}^*$. Then the  simple roots of $\widehat{\al{L}}(\ak{sl}_{2l}, \sigma^+)$ are given by $$\beta_0=2K+1\otimes\tilde{\beta}_0,$$ $$\beta_i=1\otimes \tilde{\beta}_i, \;\;i=1,\dots,l.$$

From the construction of $\widehat{\al L}(\ak{sl}_r,\sigma^+)$, it is clear that the Coxeter coefficients and their duals in this case are taken in the inverse order. We recall  the dual Coxeter coefficients of the twisted Kac-Moody algebras $\widehat{\al L}(\ak{sl}_r,\sigma^\pm)$ in the following table. 

\begin{equation}\label{table}
	\begin{array}{|l|c|c|c|c|c|c|} 
		\cline{2-7}
		\multicolumn{1}{l|}{}& a_0^\vee & a_1^\vee & a_2^\vee & \cdots &  a_{l-1}^\vee & a_l^\vee\\\hline
		\widehat{\al L}(\ak{sl}_{2l},\sigma^+) & 2 &2 &2 & \cdots  & 1 & 1 \\\hline
		\widehat{\al{L}}(\ak{sl}_{2l},\sigma^-) & 1 & 1 & 2 & \cdots  & 2 & 2 \\\hline
		\widehat{\al L}(\ak{sl}_{2l+1},\sigma^+) & 1 & 2 & 2 & \cdots  & 2 & 2 \\
		\hline
	\end{array}
\end{equation}
\begin{center}
	Dual Coxeter  coefficients.\\[1cm]
\end{center}

Now, when we add a scaling elements to the above algebras, i.e.  derivations $D_\pm$ such that $$[D_\pm,t^n\otimes x]=nt^n\otimes x,$$
then, by \cite[Theorem $8.5$]{K}, both Kac-Moody algebras $\hat{\al L}(\ak{sl}_r,\sigma^\pm)\oplus\bb C D_\pm$ are isomorphic to the Kac-Moody algebra $\ak{g}(A)$, where $A$ is the affine generalized Cartan  matrix of type $A^{(2)}_{r-1}$. In particular, we deduce an isomorphism $$\widehat{\al{L}}(\ak{sl}_r,\sigma^+)\oplus\bb CD_+\cong \hat{\al{L}}(\ak{sl}_r,\sigma^-)\oplus\bb CD_-.$$ Moreover, the derivations $D_\pm$ induces a weight decomposition of the algebras $\al{L}(\ak{sl}_r,\sigma^\pm)\oplus \bb CD_\pm$. The main observation is that the above isomorphism does not respect the decompositions of these algebras in powers of $t$.\\ We will see in a moment that under the above isomorphism, the fundamental weight $\lambda_0^+$ of $\hat{\al L}(\ak{sl}_r,\sigma^+)\oplus\bb CD$ is sent to twice the fundamental weight $\lambda_0^-$.

%In particular, one checks that the normalized bilinear form on $\widehat{\al L}(\ak{sl}_r,\sigma^+)$ is given by $$(P\otimes x;Q\otimes y)=\x{Res}(t^{-1}PQ)(x;y),$$
%where $(\;,\;)$ is the normalized bilinear form on $\ak{sl}_r(\bb C)$. The $2-$cocycle on $\al{L}(\ak{sl}_r,\sigma^+)$ that defines $\widehat{\al L}(\ak{sl}_r,\sigma^+)$ is given by $$\psi(g(t),h(t))=\x{Res}(\x{Tr}(\dfrac{dg}{dt}h)).$$ 

%  Denote by $D_{i,j}=M_{i,j}-M_{j,i}$.    
%\begin{itemize} 	
%	\item $H_k= \sqrt{-1}(D_{2k,2k+1})$ ($1\leqslant k \leqslant l$), $H_0=\sqrt{-1}D_{l,l+2}$. 
%	\item $E_k=E'_k+E'_{r-k}+iD_{k,r-k}$ 
%\end{itemize}

%  \begin{itemize} 	\item $H_k= i(D_{k,r-k+1}-D_{k+1,r-k})$ ($1\leqslant k \leqslant l-1$), $H_0=2iD_{l,l+2}$, $H_l=iD_{1,r}$. \item $E_k=E'_k+E'_{r-k}+iD_{k,r-k}$ \end{itemize}

%\subsection{Conformal blocks attached to a twisted Lie algebra}

\subsection*{Twisted conformal blocks} %Let $X$ be a smooth projective complex   curve with an involution $\sigma$. Denote by $R$ the ramification locus of the double cover $X\ra X/\sigma=:Y$.  \\ 
Let $\lambda_0^\pm,\cdots,\lambda_l^\pm$  be the fundamental wights of the twisted affine Lie algebras $\widehat{\al L}(\ak{sl}_r,\sigma^\pm)$, i.e. $\lambda_i^\pm$ are linear forms on the Cartan subalgebras such that $$\lambda^+_i(\beta_j)=\lambda_i^-(\alpha_j)=\delta_{ij},\; i,j=0,\dots,l.$$  Denote by $\mathring{\ak g}\subset \al{L}(\ak{sl}_r,\sigma^\pm)$ the simple Lie algebra generated by $e_i$ and $f_i$ for $i=1,\cdots,l$.  Note that $\mathring{\ak g}$ is of type $D_l$ in the case of $\sigma^+$ when $r$ is even, and it is of type $C_l$ otherwise. \\ 
Moreover, we have the identifications (see \cite[\S$12.4$]{K}) $$\lambda_i^\pm=\mathring{\lambda}_i+a_i^\vee\lambda_0^\pm, \;\ i=1,\dots,l\;,$$ where $\mathring{\lambda}_i$ ($i=1,\dots,l$) are the fundamental weights of $\mathring{\ak{g}}$.\\ 

\begin{rema} \label{level}
	Remark that, for an even rank $r$, the weight $\lambda_0^+$ has level equals $a_0^\vee=2$, while $\lambda_0^-$ has level $a_0^\vee=1$ (see Table \ref{table}). \\
\end{rema}

Denote by $\x{P}^{\sigma,\pm}$ the set of dominant integral weights of $\widehat{\al{L}}(\ak{sl}_r,\sigma^\pm)$. By \cite[\S$12.4$]{K},  one deduces a bijection between $\x{P}^{\sigma,\pm}$ and the set $$\tilde{\x{P}}^{\sigma,\pm}=\{(\lambda,k)|\lambda\in \mathring{\x{P}}, \;\gen{\lambda,\varrho}\leqslant k\},$$ where $\mathring{\x{P}}$ is the set of dominant weights of $\mathring{\ak{g}}$, and  $\varrho$ is the highest coroot of $\mathring{\ak g}$ when $r$ is even, and $\varrho$ is twice the highest coroot of $\mathring{\ak{g}}$ when $r$ is odd. \\

For $\mu^\pm\in \x{P}^{\sigma,\pm}$, denote by $\al H_{\mu^\pm}(k)$ the irreducible highest weight module of level $k$ of $\widehat{\al L}(\ak{sl}_r,\sigma^\pm)$ of highest weight $\mu^\pm$. Let $\overrightarrow{\mu}^\pm=(\mu_1^\pm,\cdots,\mu_{2n}^\pm)$ be a vector of elements of $\x{P}^{\sigma,\pm}$ parameterized by the points of $R$, and define $$\al H_{\overrightarrow{\mu}^\pm}(k)=\al H_{\mu_1^\pm}(k)\otimes\cdots\otimes \al H_{\mu_{2n}^\pm}(k).$$ 
Finally, let $\sr A_R=H^0(X\sm R,\al O_X)$. By considering the associated Lorrent series at $p\in R$, we get an inclusion $\ak{sl}_r(\sr A_R)^{\sigma^\pm}\subset\ak{sl}_r(\sr K_p)^{\sigma^\pm}$. We can than define an action of $\ak{sl}_r(\sr A_R)^{\sigma^\pm}$ on  $\al H_{\overrightarrow{\mu}^\pm}(k)$ as product of representations (i.e diagonal action).  More explicitly, for $\alpha\in\ak{sl}_r(\sr A_R)^{\sigma^\pm}$ and $X=X_1\otimes\cdots\otimes X_{2n}$, we have $$\alpha\cdot X=\sum_i X_1\otimes\cdots\otimes \alpha\cdot X_i\otimes\cdots\otimes X_{2n}.$$  

\begin{defi}
	The \emph{conformal block} attached to the data ($X$, $\sigma$, $\overrightarrow{\mu}^\pm$, $\widehat{\al{L}}(\ak{sl}_r,\sigma^\pm)$,$k$) is defined by $$\al V_{\sigma,\pm}(k)=\left[\left(\al H_{\overrightarrow{\mu}^\pm}(k)\right)_{\ak{sl}_r(\sr A_R)^{\sigma^\pm}}\right]^*,$$ 
	where for a $\ak{g}-$module $V$, we denote by  $V_{\ak{g}}$ the space of coinvariants of $V$, thus the largest quotient of $V$ on which $\ak g$ acts trivially.	
\end{defi}

%In \cite{Ho}, it is shown that these are finite dimensional vector spaces and a formula for their dimensions  is conjectured (a Verlinde formula for twisted conformal blocks).

\section{Loop groups and uniformization theorem}

\subsection{Bruhat-Tits parahoric $\al G-$torsors}
Let $\al G$ be a smooth affine group scheme over $X$. $\al G$ is said to be a parahoric Bruhat-Tits  group scheme  if there is a finite subset $R\subset X$ such that if $\al O_x$ is the completion of the local ring at $x\in R$ then  $\al G_{\al O_x}$ is a parahoric group scheme over $\x{Spec}(\al O_x)$ (in the sens of Bruhat-Tits, \cite[D\'efinition $5.2.6$]{BT2}) for each $x\in R$ and the  fibers $\al G_y$ is semisimple for all $y\in X\sm R$. \\ 
A class of examples of such group schemes is provided by the invariant Weil restriction. Given a Galois cover $\pi:X\ra X/\Gamma$ of curves and a semisimple algebraic group $G$ over $X$ with an action of $\Gamma$ lifted from its action on $X$, then $\al G=\pi_*^\Gamma (G)$ is a parahoric group scheme over $X/\Gamma$ (provided it is not empty).  Moreover, it is shown in \cite{BS} that the stack of $\Gamma-$equivariant $G-$torsors over $X$ is in one to one correspondence with the stack of $\al G-$torsors over $X/\Gamma$. \\ In our case, we have $\Gamma=\bb Z/2=\gen{\sigma}$ and $G=\x{SL}_r$.  Consider the actions of $\sigma$ on $\x{SL}_r$ given by  $$\sigma^+(g)=\,^tg^{-1},\;\; \sigma^-=J_r\sigma^+J_r^{-1},$$
where $$J_r=\begin{pmatrix}0 & I_{r/2} \\ -I_{r/2} & 0\end{pmatrix},$$  $I_{r/2}$ is the identity matrix of size $r/2$. 

 Let $\al G$ and $\al H$ the invariant Weil restrictions of the constant group scheme $\xb{SL}_r=X\times \x{SL}_r$ defined by $$\al G=(\pi_*\xb{SL}_r)^{\sigma^+}\,,\;\;\al H=(\pi_*\xb{SL}_r)^{\sigma^-}.$$
These two are smooth affine group schemes over $Y$ which are parahoric. Denote by $\sr{M}_Y(\al G)$ and $\sr{M}_Y(\al H)$ the stacks of $\al{G}-$torsors and  $\al H-$torsors over $Y$. By \cite[Proposition $2.4$]{Z2}, we have isomorphisms $$\sr{SU}_X^{\sigma,+}(r)\cong \sr M_Y(\al G)\,,\;\;\sr{SU}_X^{\sigma,-}(r)\cong \sr M_Y(\al H).$$

\subsection{Uniformization theorem}
For a ramification point  $p\in X$, denote by $\sr O_p$ the completion of the local ring at $p$,  $\sr K_p$  its fraction field and $\sr V_p$ a complementary vector subspace of $\sr O_p$ in $\sr K_p$. Let $\sr{SU}_X(r)$ denote the moduli stack of rank $r$ vector bundles over $X$ with a trivialization of its determinant. Let's fix the canonical linearization on $\al O_X$, so we identify $\sigma^*\al O_X$ and $\al O_X$. Moreover, since all the types are isomorphic, we assume hereafter that $\tau=(+1,\cdots,+1)\mod \pm1$ and denote the corresponding moduli stack by $\sr{SU}_X^{\sigma,-}(r)$.  \\ In \cite{LB}, it is proved that $$\sr{SU}_X(r)\cong\x{\textbf{SL}}_r(\sr O_p)\backslash\x{\textbf{SL}}_r(\sr K_p)/ \x{\textbf{SL}}_r(\sr A_p),$$ 
where $\sr A_p=H^0(X-p,\al O_X)$. Let  $t$ be  a local parameter at  $p$, then $\sr K_p\cong \bb C((t))$, $\sr O_p\cong \bb C [[t]]$. 

Consider the two involutions $\sigma^\pm$ on $\x{\textbf{SL}}_r(\sr K_p)$ given by  $$g(t)\ra \sigma^+(g(t))=\,^tg(-t)^{-1},$$ $$g(t)\ra\sigma^-(g(t))=J_r\cdot \,^tg(-t)^{-1} \cdot J_r^{-1}.$$ %anti-diagonal matrix with entries equal to $1$. %and  $J_-(r)$ is obtained from $J_+(r)$ by interchanging the two elements at positions $(r/2,r/2+1)$ and $(r/2+1,r/2)$. 
%Note that $$\x{pf}(J_r)=1.$$

Let $\sr Q= \x{\textbf{SL}}_r(\sr O_p)\bs\x{\textbf{SL}}_r(\sr K_p) $. In \cite{PR2}, it is proved that $$\sr Q^{\sigma^+} =\xb{SL}_r(\sr O_p)^{\sigma^+}\bs\xb{SL}_r(\sr K_p)^{\sigma^+}.$$
Note that $\xb{SL}_r(\sr{O}_p)^{\sigma^+}$ is the maximal parahoric subgroup of $\xb{SL}_r(\sr{K}_p)^{\sigma^+}$ and, with the notations of loc. cit. this case corresponds to $I=\{0\}$. In fact, their involution is the conjugation of $\sigma^+$ by the anti-diagonal  matrix $D_r$  with all entries equal $1$. But this does not change much. Indeed, by taking a matrix $A$ such that $D_r=\,^tAA$ (such matrix can be constructed easily), then conjugation by $A$ realizes an isomorphism between $\xb{SL}_r(\sr K)^{\sigma^+}$ and their invariant locus. \\
We denote in the sequel by $\sr Q^{\sigma,\pm}$ the quotient  $\xb{SL}_r(\sr O_p)^{\sigma^\pm}\bs\xb{SL}_r(\sr K_p)^{\sigma^\pm}$, for some $p\in R$. %For a type $\tau$ of $\sigma-$alternating vector bundles denote by $\sr Q^{\sigma,\tau}$ the product $$\Pi_{p\in R}\xb{SL}_r(\sr O_p)^{\sigma^{\tau_p}}\backslash\Pi_{p\in R}\xb{SL}_r(\sr K_p)^{\sigma^{\tau_p}}.$$

\begin{theo}
	We have an isomorphism of stacks 
	$$\sr{SU}_X^{\sigma,\pm}(r)\cong \xb{SL}_r(\sr O_p)^{\sigma^\pm}\backslash\xb{SL}_r(\sr K_p)^{\sigma^\pm}/ \xb{SL}_r(\sr A_p)^{\sigma^\pm}.$$ 
	%$$\sr{SU}_X^{\sigma,\tau}(r)\cong \Pi_{p\in R}\xb{SL}_r(\sr O_p)^{\sigma^{\tau_p}}\backslash\Pi_{p\in R}\xb{SL}_r(\sr K_p)^{\sigma^{\tau_p}}/ \xb{SL}_r(\sr A_R)^{\sigma^+}.$$
	Moreover, the projections $\sr Q^{\sigma,\pm}\ra \sr{SU}_X^{\sigma,\pm}(r)$ %and  $\sr Q^{\sigma,\tau}\ra \sr{SU}_X^{\sigma,\tau}(r)$ 
	are locally trivial for the fppf topology.
\end{theo}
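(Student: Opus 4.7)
The plan is to pass to the quotient curve $Y$ via the identifications $\sr{SU}_X^{\sigma,+}(r)\cong \sr M_Y(\al G)$ and $\sr{SU}_X^{\sigma,-}(r)\cong \sr M_Y(\al H)$ from \cite{Z2} recalled above, and then to apply Heinloth's uniformization theorem \cite{He} for parahoric torsors at the branch point $q=\pi(p)$. Denoting $\al G^+=\al G$ and $\al G^-=\al H$, Heinloth's result produces a canonical isomorphism
$$\sr M_Y(\al G^\pm)\;\cong\;\al G^\pm(\sr O_q)\backslash \al G^\pm(\sr K_q)/\al G^\pm(\sr A_q),$$
where $\sr A_q=H^0(Y-q,\al O_Y)$, together with fppf (in fact \'etale) local triviality of the projection from the twisted affine Grassmannian $\al G^\pm(\sr O_q)\backslash \al G^\pm(\sr K_q)$ onto $\sr M_Y(\al G^\pm)$.

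It then remains to identify each of the three parahoric groups on $Y$ with the corresponding $\sigma^\pm$-invariant loop group at $p$. Since $p$ is the unique preimage of $q$ and the cover is totally ramified there, one has $\pi^{-1}(\x{Spec}(\sr O_q))=\x{Spec}(\sr O_p)$, $\pi^{-1}(\x{Spec}(\sr K_q))=\x{Spec}(\sr K_p)$ and $\pi^{-1}(Y-q)=X-p$. By the defining adjunction of the invariant Weil restriction, pulling back along these base changes turns sections of $\al G^\pm=(\pi_*\xb{SL}_r)^{\sigma^\pm}$ into $\sigma^\pm$-invariant sections of $\xb{SL}_r$ on $X$, where the involution now incorporates both the matrix involution and the covering action $t\mapsto -t$ on the local parameter -- which is exactly the involution $\sigma^\pm$ on $\xb{SL}_r(\sr K_p)$ defined in Section \ref{sec2}. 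This yields
$$\al G^\pm(\sr O_q)=\xb{SL}_r(\sr O_p)^{\sigma^\pm},\;\;\al G^\pm(\sr K_q)=\xb{SL}_r(\sr K_p)^{\sigma^\pm},\;\;\al G^\pm(\sr A_q)=\xb{SL}_r(\sr A_p)^{\sigma^\pm},$$
and substituting these into the displayed isomorphism completes the first assertion.

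The main subtlety is checking the hypotheses under which Heinloth's theorem applies to the \emph{twisted} parahoric schemes $\al G^\pm$. Smoothness and affineness are immediate from the invariant Weil restriction construction, but one must also control connectedness properties of the fibres in order to secure the $H^1_{\x{fppf}}$-vanishing on $Y-q$ that drives both the surjectivity of the uniformization map and the construction of local sections. This reduces to the classification of parahorics for the twisted affine root systems of type $A^{(2)}_{r-1}$ recalled in Section \ref{sec2}: in the $\sigma^-$-case the reductive quotient of the relevant parahoric is essentially $\xb{Sp}_{2l}$, while in the $\sigma^+$-case the maximal parahoric under consideration still has connected reductive quotient in the same classification. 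Granting this input, Heinloth's theorem delivers simultaneously the presentation of $\sr{SU}_X^{\sigma,\pm}(r)$ and the fppf local triviality of $\sr Q^{\sigma,\pm}\to\sr{SU}_X^{\sigma,\pm}(r)$, finishing the proof.
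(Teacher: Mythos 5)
Your proposal follows essentially the same route as the paper: both reduce to the identifications $\sr{SU}_X^{\sigma,\pm}(r)\cong\sr M_Y(\al G)$, $\sr M_Y(\al H)$ and then invoke Heinloth's uniformization theorem at the branch point, identifying the parahoric loop groups on $Y$ with the $\sigma^\pm$-invariant loop groups at $p$. You spell out the Weil-restriction adjunction and the hypotheses of Heinloth's theorem in more detail than the paper (which simply cites the main theorem of \cite{He}), but the argument is the same.
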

\begin{proof} Recall that we have isomorphisms $$\sr{SU}_X^{\sigma,+}(r)\cong \sr M_Y(\al G)\,,\;\;\sr{SU}_X^{\sigma,-}(r)\cong \sr M_Y(\al H).$$  Now, using the main Theorem of \cite{He}, we deduce, for a ramification point $p\in X$ over a branch point $y\in Y$, that 
	\begin{align*}
		\sr M_Y(\al G)&\cong \al G(\sr O_y)\bs\al G(\sr K_y)/H^0(Y\sm y,\al G)\\
		&\cong \xb{SL}_r(\sr O_p)^{\sigma^+}\bs\xb{SL}_r(\sr K_p)^{\sigma^+}/\xb{SL}_r(\sr A_p)^{\sigma^+}.
	\end{align*}	%And if $y\in Y$ is a branch point we get 
	$$\sr M_Y(\al H)\cong \al{H}(\sr O_y)\bs\al{H}(\sr K_y)/H^0(Y\sm y,\al H).$$
	And we have $H^0(Y\sm y,\al H)\cong \xb{SL}_r(\sr A_p)^{\sigma^-}$. Thus 	$$\sr M_Y(\al H)\cong \xb{SL}_r(\sr O_p)^{\sigma^-}\bs\xb{SL}_r(\sr K_p)^{\sigma^-}/\xb{SL}_r(\sr A_p)^{\sigma^-}.$$ % has not an explicit description. So we need to take away all the branch point, and hence we get an identification $H^0(Y\sm B,\al H_\tau)\cong \xb{SL}(\sr A_R)^{\sigma^+}$.  
\end{proof}

\subsection{The Grassmannian viewpoint}\label{grass}
Note that $\sr Q^{\sigma,+}$ is an ind-variety, which is a direct limit of a system of projective varieties $(\sr Q^{\sigma,+}_N)_{N\geqslant 0}$, the $\sr Q^{\sigma,+}_N$ are the quotients $(S^0)^{\sigma^+}\bs (S^N)^{\sigma^+}$, where $S^N$ is the subscheme of $\xb{SL}_r(\sr K)$ parameterizing matrices $A(t)$ such that $A(t)$ and $A(t)^{-1}$ have poles of order at most $N$. As we said above, since all the stacks $\sr{SU}_X^{\sigma,\tau}(r)$ are isomorphic, so for simplicity we assume that $\tau$ is the trivial type.  So let's denote  $\sr Q^{\sigma,-}$ the quotient $\xb{SL}_r(\sr O_p)^{\sigma^{-}}\bs\xb{SL}_r(\sr K_p)^{\sigma^{-}}$. This is again an ind-variety direct limit of $(\sr Q^{\sigma,-}_N)_{N\geqslant 0}$, the $\sr Q^{\sigma,-}_N$ are the quotients $(S^{0})^{\sigma^{-}}\bs(S^{N})^{\sigma^{-}}$. 

By \cite[Proposition $2.4$]{LB} , the varieties $\sr Q_N:=S^0\bs S^{N}$ are  identified with  subvarieties (with the same underline topological spaces)  of the Grassmannian $\x{Gr}^{t}(rN,2rN)$ of $t-$stable subspaces of dimension $rN$ of $F_N^r:=t^{-N}\sr O^{\oplus r}/t^{N}\sr O^{\oplus r}$. 

Consider   the $\sigma-$Hermitian forms $\Psi_\pm:\sr K^{r}\times \sr K^{r}\lra \sr K$  defined by $$\Psi_+(v,w)=\,^tv\cdot\sigma(w)= \sum_{i=1}^{r}v_i\sigma(w_i),$$ $$\Psi_-(v,w)= \,^tv\cdot J_r\cdot \sigma(w),$$ where $v=\,^t(v_1,\cdots,v_i)$ and $w=\,^t(w_1,\cdots,w_i)$ are in $\sr K^r$. Then the groups   $\x{SL}_r(\sr K)^{\sigma^\pm}$ can be defined as the loci of matrices $A\in\x{SL}_r(\sr K)$ which are unitary with respect to  the forms  $\Psi_\pm$, i.e. $\Psi_{\pm}(A\cdot v,A\cdot w)=\Psi_{\pm}(v,w)$ for all $v,w\in\sr K^r$.  \\  Consider the forms  $\tilde{\Psi}_\pm$ on $t^{-N}\sr O^r\subset \sr K^r$ defined as the composition $$\tilde{\Psi}_\pm: t^{-N}\sr O^{\oplus r}\times t^{-N}\sr O^{\oplus r}\xrightarrow{\Psi_\pm} t^{-2N}\sr O\xrightarrow{\x{Res}}\bb C,$$ where $\x{Res}:\sr K\ra \bb C$ is the residue map.  
The forms $\tilde{\Psi}_\pm$ vanish on $t^N\sr O^{\oplus r}\subset t^{-N}\sr O^{\oplus r}$,  hence they induce  two forms, denoted again by $\tilde{\Psi}_\pm$, on $F_N^r$
$$\tilde{\Psi}_\pm:F_N^r\times F_N^r\lra \bb C.$$ 
\begin{lemm}  $\tilde{\Psi}_+$ is an anti-symmetric non-degenerate bilinear form  on $F_N^r$, while  $\tilde{\Psi}_-$ is a symmetric non-degenerate bilinear form. 
\end{lemm}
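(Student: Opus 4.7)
The proof is a direct verification of (anti-)symmetry and non-degeneracy, working locally in $\sr{K} = \bb{C}((t))$ where the involution $\sigma$ acts by $t \mapsto -t$. The crucial arithmetic input is that $\sigma(t^n) = (-1)^n t^n$ forces $\x{Res} \circ \sigma = -\x{Res}$.

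For (anti-)symmetry, let $v,w \in t^{-N}\sr{O}^{\oplus r}$. Applying $\sigma$ entry-wise to the scalar $\Psi_+(v,w) = {}^tv\,\sigma(w)$, and using $\sigma^2 = \x{id}$ together with the fact that a scalar coincides with its own transpose, gives
\[
\sigma(\Psi_+(v,w)) = {}^t\sigma(v)\,w = {}^tw\,\sigma(v) = \Psi_+(w,v).
\]
Taking residues yields $\tilde{\Psi}_+(w,v) = -\tilde{\Psi}_+(v,w)$. The identical computation applied to $\Psi_-$, combined now with ${}^tJ_r = -J_r$, produces $\sigma(\Psi_-(v,w)) = -\Psi_-(w,v)$, whence $\tilde{\Psi}_-(w,v) = \tilde{\Psi}_-(v,w)$.

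For non-degeneracy, I would invoke the standard perfect residue pairing $\langle f,g\rangle = \x{Res}(fg)$ on $F_N = t^{-N}\sr{O}/t^N\sr{O}$: in the basis $\{t^k\}_{-N \leq k < N}$ its Gram matrix is the antidiagonal identity, so the pairing is perfect, and its coordinate-wise extension to $F_N^r$ remains perfect. Since $\sigma$ preserves both lattices $t^{\pm N}\sr{O}^{\oplus r}$, it descends to a $\bb{C}$-linear involution $\bar{\sigma}$ of $F_N^r$, and a one-line computation gives
\[
\tilde{\Psi}_+(v,w) = \langle v,\bar{\sigma}(w)\rangle, \qquad \tilde{\Psi}_-(v,w) = \langle v, J_r \bar{\sigma}(w)\rangle.
\]
Because $I_r$ and $J_r$ are invertible, each of the maps $w \mapsto \bar{\sigma}(w)$ and $w \mapsto J_r\bar{\sigma}(w)$ is a $\bb{C}$-linear automorphism of $F_N^r$; composing a perfect pairing with such an automorphism in the second slot keeps it perfect, so both $\tilde{\Psi}_\pm$ are non-degenerate.

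The computation is essentially routine; the only genuine subtlety is the sign bookkeeping in the derivation of $\sigma(\Psi_-(v,w)) = -\Psi_-(w,v)$, where the antisymmetry ${}^tJ_r = -J_r$ and the identity $\x{Res}\circ \sigma = -\x{Res}$ conspire to swap the symmetric and anti-symmetric roles of $\tilde{\Psi}_+$ and $\tilde{\Psi}_-$ compared to what one might naively guess from the names $\sigma^{\pm}$.
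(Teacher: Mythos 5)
Your proof is correct. It differs in presentation from the paper's: the paper writes the induced forms explicitly in the canonical basis $\{t^k e_i\}_{-N\leq k<N}$ of $F_N^r$, obtaining $\tilde{\Psi}_+(v,w)=\sum_{i,j}(-1)^{-j-1}a_j^i b_{-j-1}^i$ (and the analogous twisted formula for $\tilde{\Psi}_-$), from which both the (anti-)symmetry and the non-degeneracy are read off directly, since the Gram matrix is anti-diagonal with entries $\pm1$. You instead isolate the two structural facts doing the work: the identity $\x{Res}\circ\sigma=-\x{Res}$ (forced by $\sigma(t^n)=(-1)^nt^n$), which combined with the scalar-transpose trick and ${}^tJ_r=-J_r$ gives the symmetry types without ever choosing a basis, and the factorization $\tilde{\Psi}_\pm(v,w)=\langle v,\bar{\sigma}(w)\rangle$ resp. $\langle v,J_r\bar{\sigma}(w)\rangle$ through the standard perfect residue pairing, which reduces non-degeneracy to the invertibility of $\bar{\sigma}$ and $J_r\bar{\sigma}$. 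The two arguments are of course the same computation at bottom, but yours makes the sign mechanism transparent (in particular why $\sigma^+$ yields the \emph{anti}-symmetric form and $\sigma^-$ the symmetric one) and avoids the index bookkeeping in the paper's displayed formulas; the paper's version has the minor advantage of exhibiting the explicit Gram matrices, which are reused implicitly when these quadrics are identified with isotropic Grassmannians in Proposition \ref{isotropic}. No gaps: you correctly use that $\sigma$ preserves both lattices $t^{\pm N}\sr O^{\oplus r}$ so that $\bar{\sigma}$ is defined on the quotient, and the descent of $\tilde{\Psi}_\pm$ to $F_N^r$ is established in the paper just before the lemma, so it is not part of the proof burden here.
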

\begin{proof}
	Consider the canonical basis of the vector space  $F_r$ given by the classes of $t^k$ for $k=-N,\cdots,N-1.$ It induces a canonical basis of $F_N^r$. Then for $v=(v_i)_i, w=(w_i)_i\in F_N^r$, the forms $\Psi_\pm$ are given explicitly in this basis by $$\Psi_+(v,w)=\sum_{i=1}^{r}\sum_{j=-N}^{N-1}(-1)^{-j-1}a_j^ib_{-j-1}^i,$$ $$\Psi_-(v,w)=\sum_{i=1}^{r}\sum_{j=-N}^{N-1}(-1)^{-j-1+\varepsilon(i)}a_j^ib_{-j-1}^{r-i},$$ 
	where $\varepsilon(i)$ equals $1$ if $i\in\{1,\cdots,r/2\}$,  and $0$ otherwise, and $v_i=(a_j^i)$, $w_i=(b_j^i)$ are in $F_N$.  From this the result follows easily. 
\end{proof}
 
\begin{prop} \label{isotropic}
	The spaces $\sr Q^{\sigma,\pm}_N$ are isomorphic to  closed subvarieties (with the same underline topological subspaces) of the isotropic Grassmannian $\x{Gr}^{t,\sigma}_\pm(rN,2rN)$ which parameterizes $\tilde{\Psi}_\pm-$isotropic $t-$stable vector subspaces  of $F_N^r$ of dimension $rN$. 
\end{prop}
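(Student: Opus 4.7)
The strategy is to restrict the Laszlo--Beauville embedding from \cite[Proposition~2.4]{LB}, which identifies the underlying topological space of $\sr Q_N$ with that of a subvariety of $\x{Gr}^t(rN,2rN)$ via $[A]\mapsto V_A:=A\cdot\sr O^{\oplus r}/t^N\sr O^{\oplus r}$, to the $\sigma^\pm$-fixed loci, and to check that the image is cut out precisely by the isotropy condition for $\tilde{\Psi}_\pm$. Thus the task reduces, set-theoretically, to proving that for $A\in S^N$ the class $[A]$ is $\sigma^\pm$-invariant if and only if $V_A$ is $\tilde{\Psi}_\pm$-isotropic of dimension $rN$.

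For the forward implication, suppose $A\in\xb{SL}_r(\sr K_p)^{\sigma^\pm}$. By the characterization of $\xb{SL}_r(\sr K)^{\sigma^\pm}$ as the unitary group of $\Psi_\pm$ recalled just before the proposition, we have $\Psi_\pm(Av,Aw)=\Psi_\pm(v,w)$ for all $v,w\in\sr K^{\oplus r}$. Taking $v,w\in\sr O^{\oplus r}$ gives $\Psi_\pm(A\sr O^{\oplus r},A\sr O^{\oplus r})\subset\Psi_\pm(\sr O^{\oplus r},\sr O^{\oplus r})\subset\sr O$, so every such pairing has vanishing residue and $V_A$ is $\tilde{\Psi}_\pm$-isotropic.

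For the converse, let $W\subset F_N^r$ be a $t$-stable $\tilde{\Psi}_\pm$-isotropic subspace of dimension $rN$ and let $V\subset t^{-N}\sr O^{\oplus r}$ be its preimage, a full-rank $\sr O$-lattice in $\sr K^{\oplus r}$. The $\sr O$-stability of $V$ together with isotropy forces $\Psi_\pm(V,V)\subset\sr O$: if $\Psi_\pm(v,w)$ had a non-zero coefficient in degree $-k-1$ for some $k\geqslant 0$, then $\Psi_\pm(t^kv,w)=t^k\Psi_\pm(v,w)$ would have non-zero residue while $t^kv\in V$, contradicting the assumed isotropy. Since $\tilde{\Psi}_\pm$ is non-degenerate and $\dim W=\tfrac12\dim F_N^r$, the restricted form $\Psi_\pm|_V$ is unimodular on the free $\sr O$-module $V$. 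Its reduction modulo $t$ is a non-degenerate symmetric (for $\sigma^+$) or alternating (for $\sigma^-$) form on $\bb C^{\oplus r}$, which is equivalent over $\bb C$ to the form realized on $\sr O^{\oplus r}$ by $\Psi_\pm$. An iterative Hensel-type argument then lifts this equivalence to an $\sr O$-basis $v_1,\dots,v_r$ of $V$ whose Gram matrix equals $I_r$ for $\sigma^+$ or $J_r$ for $\sigma^-$; writing $v_i=A(t)\,e_i$ produces a matrix $A\in S^N$ preserving $\Psi_\pm$ with $V_A=W$, after a sign adjustment to secure $\det A=1$ (compatible with $\sigma^\pm$-invariance since $\det A=\pm 1$ is automatic for a $\sigma^\pm$-fixed matrix).

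The main technical obstacle is the Hensel-lifting step: showing that unimodular $\sigma$-hermitian (respectively $\sigma$-skew-hermitian) forms over the complete local ring $\sr O=\bb C[[t]]$ equipped with the involution $t\mapsto -t$ are classified up to equivalence by their reduction modulo $t$. Granted this, the set-theoretic image of $\sr Q^{\sigma,\pm}_N$ in $\x{Gr}^t(rN,2rN)$ is exactly $\x{Gr}^{t,\sigma}_\pm(rN,2rN)$, which is a closed subvariety cut out by the vanishing of the bilinear form $\tilde{\Psi}_\pm$; the equality of underlying topological spaces then follows from the bijection established above combined with the corresponding statement in \cite[Proposition~2.4]{LB}.
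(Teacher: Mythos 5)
Your argument follows essentially the same route as the paper's proof: the forward direction via unitarity of $\Psi_\pm$ on the isotropic image of $\sr O^{\oplus r}$, the identical residue argument showing $\Psi_\pm(A\cdot v,A\cdot w)\in\sr O$, and then the classification of the resulting unimodular Gram matrix over $\sr O$ --- which the paper phrases as the factorization $B(t)={}^tC(-t)\cdot C(t)$ with $C(t)\in\x{SL}_r(\sr O)$ and simply asserts, i.e.\ exactly the Hensel-lifting step you isolate as the technical core. One small slip in your version: for a $\Psi_\pm$-unitary $A$ the relation $\det A(t)\cdot\det A(-t)=1$ only forces $\det A(0)=\pm1$, not $\det A=\pm1$ as a power series (e.g.\ $\det A=e^t$ is possible), but the fix is harmless --- rescale one basis vector by $\det A(t)^{-1}$, which preserves the Gram condition --- and the paper sidesteps the issue entirely by starting from a representative $A\in S^N\subset\xb{SL}_r(\sr K)$ and correcting it on the left by $C(t)\in\x{SL}_r(\sr O)$, so the determinant never leaves $1$.
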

\begin{proof}
	We prove it for the symmetric case, the other one follows  similarly.\\
	The image of $\sr O^{\oplus r}$ in $F_N^r$ is $\tilde{\Psi}_+-$isotropic, hence, for every $A(t)\in (S^{N})^{\sigma^+},$ the corresponding point in $\x{Gr}^{t}(rN,2rN)$ of the class of $A(t)$ in $\sr Q^{\sigma,+}_N$ is $\tilde{\Psi}_+-$isotropic. Thus it is in $\x{Gr}^{t,\sigma}_+(rN,2rN)$. 
	
	Conversely, assume that we have a point $W$ of the isotropic Grassmannian. Let $A(t)\in S^N$ be a representative of the corresponding class in $S^0\bs S^N$. We have for every $v,w\in\sr O^r$,  $\Psi_+(A\cdot v,A\cdot w)\in \sr O$, to see this assume that for some $v,w\in \sr O^r$, the coefficient of $t^{-k}$ of  $\Psi_+(A\cdot v,A\cdot w)$ is nonzero ($k>0$), then one deduces that $\x{Res}(\Psi_+(A\cdot (t^kv),A\cdot w))=\tilde{\Psi}_+(A\cdot (t^kv), A\cdot w)\not=0$, contradiction. Now let $(e_i)_i$ be the canonical basis of the $\sr K-$vector space $\sr K^r$ and let $B(t)=(\Psi_+(A\cdot e_i,A\cdot e_j))_{i,j}$, we see that $B(t)\in \x{SL}_r(\sr O)=S^0$, and we have by definition  $B(t)=\,^tA(t)A(-t)$. In particular we see that  $B(t)=\,^tB(-t)$, hence $B(t)=\,^tC(-t)\cdot C(t)$ for some $C(t)\in \x{SL}_r(\sr O)$, and $C(t)A(t)$ is also a representative of $W$ and it is of course in $(S^{N})^{\sigma^+}$. In other words the corresponding point of $W$  in $S^0\bs S^N$ is in $\sr Q^{\sigma,+}_N$. This proves the proposition.%gives an isomorphism between the attached topological spaces.
\end{proof}

Consider the variety $\sr Q_N=S^0\bs S^N$ which is as a topological space  isomorphic to  the Grassmannian $\x{Gr}^t(rN,2rN)$. Fix an identification of $\sr Q_N$ as subspace of the homogeneous space $\x{SL}_{2rN}(\bb C)/\x{P}_N$, where $\x{P}_N$ is the parabolic subgroup of $\x{SL}_{2rN}$  of matrices of  the form $$\begin{pmatrix}A&B \\ 0& C \end{pmatrix},$$ where $A,B$ and $C$ are square $rN\times rN$ matrices. Let $\al O_{\sr Q_N}(1)$ be the line bundle attached to the character $\chi:\x{P}\ra \bb C^*$ which sends a matrix as above to $\x{det}(A^{-1})$. It is well known that the Picard group of $\sr Q_N$ is infinite cyclic generated by $\al O_{\sr Q_N}(1)$ (it is actually isomorphic to the character group of the maximal parabolic subgroup $P_N$).
\begin{prop} \label{squareroot}
	The restriction of $\al O_{\sr Q_N}(1)$ to $\sr Q^{\sigma,-}_N$ has a square root, which we denote by $\al O_{\sr Q^{\sigma,-}_N}(1)$. It is in fact the generator of the Picard group of $\sr Q^{\sigma,-}_N$.
\end{prop}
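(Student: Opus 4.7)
The plan is to transport the existence of a Pfaffian square root from the classical orthogonal Grassmannian down to $\sr Q^{\sigma,-}_N$ via the embedding of Proposition \ref{isotropic}, then identify it as a generator of the Picard group through a loop-group calculation.

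By Proposition \ref{isotropic}, $\sr Q^{\sigma,-}_N$ is a closed subvariety of the isotropic Grassmannian $\x{Gr}^{t,\sigma}_-(rN,2rN)$, and forgetting the $t-$stability condition embeds it further into the classical orthogonal Grassmannian $OG(rN,2rN)$ of maximal isotropic subspaces of $F_N^r$ for the symmetric non-degenerate form $\tilde{\Psi}_-$. It is classical that $OG(rN,2rN)$ has two connected components, each a homogeneous $\x{SO}_{2rN}-$space whose Picard group is infinite cyclic, generated by the spinor (Pfaffian) line bundle $\al S$ whose square is the restriction $\al O(1)|_{OG}$ of the Pl\"ucker polarisation. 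Pulling $\al S$ back through this embedding produces a line bundle on $\sr Q^{\sigma,-}_N$ whose square equals $\al O_{\sr Q_N}(1)|_{\sr Q^{\sigma,-}_N}$; I would define $\al O_{\sr Q^{\sigma,-}_N}(1)$ to be this line bundle.

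For the Picard group assertion, I would use the loop-group description of $\sr Q^{\sigma,-}$ as the affine Grassmannian of the twisted loop group $\xb{SL}_r(\sr K_p)^{\sigma^-}$ modulo the maximal parahoric $\xb{SL}_r(\sr O_p)^{\sigma^-}$. Heinloth's description of the Picard group of affine flag varieties \cite{He}, applied in this twisted setting of type $A^{(2)}_{r-1}$, shows that $\x{Pic}(\sr Q^{\sigma,-}_N)$ is infinite cyclic for $N$ large enough, with generator the line bundle attached to the affine fundamental weight $\lambda_0^-$. By Remark \ref{level} and Table \eqref{table}, this weight has level $a_0^\vee=1$, which is exactly half the level of $\al O(1)|_{\sr Q^{\sigma,-}_N}$; hence the square root constructed above must be the generator.

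The main obstacle is the compatibility between the two constructions of the square root: one must check that $\sr Q^{\sigma,-}_N$ lies in a single connected component of $OG(rN,2rN)$ so that $\al S$ has a coherent meaning, and that the resulting line bundle agrees (up to canonical isomorphism) with the level-one fundamental weight line bundle produced by the loop-group calculation. The first point should follow from the observation that the base point $[\sr O^{\oplus r}]$ lies in a distinguished component and $\sr Q^{\sigma,-}_N$ is connected as a union of Schubert cells based at this point; the second reduces to a character computation on the tangent representation of $\xb{SL}_r(\sr O_p)^{\sigma^-}$ at the base point, comparing the central charge of the spinor bundle with the levels recorded in Table \eqref{table}.
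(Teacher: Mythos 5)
Your construction of the square root coincides with the paper's: the spinor bundle $\al S$ on a component of the orthogonal Grassmannian is exactly the line bundle the paper attaches to the square root $\chi_-$ of the character $\chi$, lifted to the preimage $\tilde{\x{P}}_N$ of $\x{P}'_N=\x{P}_N\cap\x{SO}_{2rN}$ in the double cover $\x{Spin}_{2rN}$; so the first claim is proved the same way, and your remark that one must check $\sr Q^{\sigma,-}_N$ lands in a single connected component (resolved by connectedness through the base point $[\sr O^{\oplus r}]$) is a point the paper passes over silently. Where you genuinely diverge is the generation statement. The paper stays finite-dimensional: $\x{Pic}(\x{Spin}_{2rN}/\tilde{\x{P}}_N)$ is the character group of $\tilde{\x{P}}_N$, infinite cyclic on $\chi_-$, and the claim is read off from there. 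You instead pass to the twisted affine Grassmannian, invoke Heinloth's computation of its Picard group, and pin down the generator by a level comparison ($a_0^\vee=1$ for $\lambda_0^-$ against level $2$ for the restricted determinant, consistent with Remark \ref{level}). Both routes are workable, and each leaves the same residual step implicit: the Picard group being computed is that of an ambient object --- the full ind-scheme $\sr Q^{\sigma,-}$ in your case, the spin Grassmannian in the paper's --- while the proposition asserts generation of $\x{Pic}(\sr Q^{\sigma,-}_N)$ for each fixed $N$; your ``for $N$ large enough'' hedge flags this honestly but does not cover all $N$, and closing it cleanly requires the Mathieu--Kumar description of Picard groups of Schubert varieties in Kac--Moody flag varieties, which gives $\x{Pic}(\sr Q^{\sigma,-}_N)\cong\bb Z$ on the restriction of the $\lambda_0^-$-bundle. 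The trade-off is that the paper's argument is self-contained and elementary, whereas yours identifies the generator directly with the level-one fundamental weight line bundle, which is precisely the identification exploited later in Theorem \ref{Kum}.
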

\begin{proof}
	By Proposition \ref{isotropic}, the variety  $\sr Q^{\sigma,-}_N$ is isomorphic to a subvariety of the classical Grassmannian  $\x{SO}_{2rN}(\bb C)/\x{P}'_N$, where $\x{P}'_N=\x{P}_N\cap\x{SO}_{2rN}(\bb C)$. The restriction of the character $\chi$ to $\x{P}'_N$ is denoted again by $\chi$. Now, consider the universal cover of $\x{SO}_{2rn}(\bb C)$, which is  the Spin group $\x{Spin}_{2rN}(\bb C)$. It is a  double  cover of  $\x{SO}_{2rN}(\bb C)$. Let $\tilde{\x{P}}_N\subset\x{Spin}_{2rN}(\bb C)$ the inverse image of $\x{P}'_N$. Then, by \cite[Chapter $5$, Theorem $3.3.1$]{DSS}, the lifting of $\chi$ to $\tilde{\x{P}}_N$ has a square root which we denote  by $\chi_{-}$.  Since we have $$\x{Spin}_{2rn}(\bb C)/\tilde{\x{P}}\cong \x{SO}_{2rn}/\x{P}'_N,$$ we deduce that  the line bundle over $\sr Q^{\sigma,-}_N$ attached to $\chi_{-}$ is the square root  of the restriction of $\al O_{\sr Q_N}(1)$. \\ 
	The Picard group of $\x{Spin}_{2rN}/\tilde{\x{P}}_N$ is infinite cyclic isomorphic to the character group of $\tilde{\x{P}}_N$, which is generated by $\chi_{-}$. This implies the second claim.
\end{proof}

\begin{prop}\label{reduced}
	The ind-varieties $\sr{Q}^{\sigma,\pm}$ are integral.
\end{prop}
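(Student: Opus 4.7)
The plan is to reduce integrality of the ind-variety $\sr{Q}^{\sigma,\pm}$ to integrality of its finite-dimensional approximations $\sr{Q}^{\sigma,\pm}_N = (S^0)^{\sigma^\pm}\bs(S^N)^{\sigma^\pm}$ introduced in Subsection \ref{grass}. Since $\sr{Q}^{\sigma,\pm} = \varinjlim_N \sr{Q}^{\sigma,\pm}_N$ is exhausted along closed immersions, it is enough to prove that each $\sr{Q}^{\sigma,\pm}_N$ is reduced and irreducible and that the transition maps $\sr{Q}^{\sigma,\pm}_N \hookrightarrow \sr{Q}^{\sigma,\pm}_{N+1}$ send the generic point of the smaller piece into the closure of the generic point of the larger piece, so that the inductive system has a single minimal prime at every stage. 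This compatibility of generic points is routine, since the inclusions $S^N \hookrightarrow S^{N+1}$ are manifestly compatible with the involutions $\sigma^\pm$.

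For irreducibility the key tool is Proposition \ref{isotropic}: the underlying topological space of $\sr{Q}^{\sigma,\pm}_N$ agrees with the subset of $t$-stable isotropic subspaces inside the classical isotropic Grassmannian attached to $\tilde{\Psi}_\pm$. In the symplectic case ($\sigma^+$) the isotropic Grassmannian is smooth and irreducible, and the $t$-stability condition cuts out an irreducible closed subset, which one verifies by exhibiting a dense orbit under the identity component of $(S^0)^{\sigma^+}$. In the orthogonal case ($\sigma^-$) the isotropic Grassmannian has two connected components distinguished by a parity invariant; computing this invariant on the image of the identity $e \in (S^N)^{\sigma^-}$ and noting that $(S^0)^{\sigma^-}$-translates preserve the component shows that only one component of the ambient Grassmannian appears in $\sr{Q}^{\sigma,-}_N$.

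For reducedness, the cleanest route is to invoke the results of Pappas-Rapoport \cite{PR2} on reducedness of affine flag varieties for twisted parahoric group schemes in type $A$, which apply directly in the present characteristic zero setting. A more hands-on alternative is to use the affine Bruhat decomposition of $\sr{Q}^{\sigma,\pm}_N$ into Schubert cells, exhibit explicit coordinates on the top-dimensional open cell realizing it as an affine space, and then spread reducedness across the whole variety by $(S^N)^{\sigma^\pm}$-translation. The main obstacle is reducedness itself: a priori the scheme-theoretic quotient defining $\sr{Q}^{\sigma,\pm}_N$ could carry nilpotents coming from the twisting, and ruling this out requires either the full machinery of twisted loop groups from \cite{PR2} or a careful local analysis of the Schubert stratification. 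Irreducibility, by contrast, is essentially a topological statement once Proposition \ref{isotropic} is in hand.
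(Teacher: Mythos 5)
Your treatment of reducedness coincides with the paper's: both rest on the Pappas--Rapoport reducedness theorem for twisted affine flag varieties (\cite[Theorem $0.2$]{PR2}), and your ``hands-on alternative'' via Schubert cells is not pursued by the paper and is not needed. Where you diverge is irreducibility. The paper disposes of it in one line: $\sr Q^{\sigma,\pm}$ is already known to be connected, and a connected ind-group (hence also a homogeneous space under one) is irreducible --- the same principle is invoked again later in the paper for $\xb{SL}_r(\sr A_R)^{\sigma^\pm}$. You instead work through the finite-dimensional pieces $\sr Q^{\sigma,\pm}_N$ and the isotropic Grassmannian model of Proposition \ref{isotropic}. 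That route could be made to work, but as written it has a real gap: the assertion that the $t$-stable isotropic locus is irreducible because it admits a dense orbit of $(S^0)^{\sigma^\pm}$ is precisely the point that needs proof. The $(S^0)^{\sigma^\pm}$-orbits are the Schubert cells of the twisted affine Grassmannian, and $\sr Q^{\sigma,\pm}_N$ is a union of such cells; irreducibility amounts to showing that this union is the closure of a single cell, i.e.\ that the indexing set of cosets has a unique maximal element for the Bruhat order. You assert this rather than prove it, and the parity argument in the orthogonal case is likewise only sketched. The care you take with generic points along the transition maps is also unnecessary: an increasing union of irreducible closed subsets is automatically irreducible. In short, connectedness of $\sr Q^{\sigma,\pm}$ (which the paper takes as known) already yields irreducibility for free, so your Grassmannian analysis is heavier machinery than required and is the one place where the argument is incomplete.
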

\begin{proof} We know already that $\sr Q^{\sigma,\pm}$ are connected, hence they are irreducible. Moreover, by \cite[Theorem $0.2$]{PR2}, the flag varieties $\sr Q^{\sigma,\pm}$ are reduced. 
\end{proof}
%\begin{exam}
	%In the above proof, one can show  that $\mathring{V}_N^\pm$ is not empty. 
	%We give an example in rank $2$ of a non trivial element of $V_N^+$. If $N$ is odd, then one can take $$\begin{pmatrix} 1 + t^{-N}& 1+it^{-N} \\ -1+it^{-N} & 1- t^{-N} \end{pmatrix}.$$ One can use it to generate a bunch of examples in any rank.
%\end{exam}

\subsection{Central extension}

Consider the \emph{canonical} central extension of $\xb{SL}_r(\sr K)$ defined in [\cite{BL}, Section $4$]: $$0\ra \bb G_m\ra\widehat{\xb{SL}_r}(\sr K)\ra \xb{SL}_r(\sr K)\ra 0.$$
The actions of  $\sigma^\pm$ lift to $\widehat{\xb{SL}_r}(\sr K)$ giving a central extension of $\xb{SL}_r(\sr K)^{\sigma^\pm}$  
%$$ 0\ra \bb G_m\ra\widehat{\xb{SL}_r}(\sr K)^{\sigma}\ra \xb{SL}_r(\sr K)^{\sigma}\ra 0,$$ 
$$ 0\ra \bb G_m\ra\widehat{\xb{SL}_r}(\sr K)^{\sigma^\pm}\ra \xb{SL}_r(\sr K)^{\sigma^\pm}\ra 0.$$ 
Indeed, let $R$ be a $\bb C-$algebra, for $\gamma\in\x{SL}_r(R((t)))$ let 
$$\gamma=\begin{pmatrix} a(\gamma) & b(\gamma)\\ c(\gamma) & d(\gamma) \end{pmatrix}$$ 
be its decomposition with respect to $R((t))=\sr V_R\oplus R[[t]]$. Recall that $\sr V$ is a complementary vector subspace of $\sr O$ in $\sr K$.\\
By \cite{LB}, an element of $\x{SL}_r(R((t)))$ is given, locally on $\x{Spec}(R)$, by a pair $(\gamma,u)$ where $\gamma\in \x{SL}_r(R((t)))$, $u\in \x{Aut}(\sr V_R)$ such that $u\equiv a(\gamma) \mod \x{End}^f(\sr V_R)$, where $\x{End}^f(\sr V_R)\subset \x{End}(\sr V_R)$ is the set of finite rank endomorphisms of $\sr V_R$. By \cite{LB} Proposition $4.3$,  the map $\gamma\lra \overline{a}(\gamma)$ is a group homomorphism from $\x{SL}_r(R((t)))$ onto the group $\x{Aut}^f(\sr V_R)$ of  units of $\x{End}(\sr V_R)/\x{End}^f(\sr V_R)$. It follows that $$\overline{a}(\gamma^{-1})=\overline{a}(\gamma)^{-1},$$
hence $$u^{-1}\equiv a(\gamma^{-1})\mod \x{End}^f(\sr V_R).$$
So, define the following actions on $\widehat{\xb{SL}_r}(\sr K)$ $$\sigma^+:(\gamma,u)\lra(\,^t\gamma(-t)^{-1},\,^tu(-t)^{-1}),$$ $$\sigma^-:(\gamma,u)\lra(\,J_r\,^t\gamma(-t)^{-1}J_r^{-1},J_r\,^tu(-t)^{-1}J_r^{-1}).$$
Clearly these  are  involutions which lift $\sigma^\pm$ on $\xb{SL}_r(\sr K)$. \\

The Lie algebra attached to $\widehat{\xb{SL}}_r(\sr K)$ is given by  the central extension
\begin{equation} \label{eq1}0\ra\bb C\ra\widehat{\ak{sl}_r}(\sr K)\ra \ak{sl}_r(\sr K)\ra 0.\end{equation} It is in fact  isomorphic to the affine Lie algebra  $\hat{\al L}(\ak{sl}_r)=\ak{sl}_r(\sr K)\oplus \bb C$, with the Lie algebra structure given by  $$[(\alpha,u),(\beta,v)]=\left([\alpha;\beta],\x{Res}(\x{Tr}(\dfrac{d\alpha}{dt}\beta))\right),$$ 
where $\x{Res}$ stands for the \emph{residue}. 
By pulling back the exact sequence (\ref{eq1}) via the inclusions $\ak{sl}_r(\sr K)^{\sigma^\pm}\hookrightarrow \ak{sl}_r(\sr K)$ we get the central extensions $$ \xymatrix{0\ar[r]&\bb C\ar[r]&\widehat{\ak{sl}_r}(\sr K)\ar[r]& \ak{sl}_r(\sr K) \ar[r]&  0\\0\ar[r]&\bb C\ar[r]\ar@{=}[u]&\widehat{\ak{sl}_r}(\sr K)^{\sigma^\pm}\ar[r]\ar[u]& \ak{sl}_r(\sr K)^{\sigma^\pm}\ar[u] \ar[r]&  0,}$$ where $\sigma^\pm$ act on $\widehat{\ak sl_r}(\sr K)$ by their  actions on the first summand (which are given in Lemma \ref{actions} below). These are (after adding scaling elements) affine Kac-Moody Lie algebras of twisted type $A_{r-1}^{(2)}$. They are in fact the Lie algebras of the twisted  groups $\widehat{\xb{SL}}_r(\sr{K})^{\sigma^\pm}$.
\begin{lemm}\label{actions}
	The Lie algebras associated to $\widehat{\xb{SL}}_r(\sr K)^{\sigma^\pm}$ are the twisted affine Lie algebras of $\ak{sl}_r(\sr K)$ given by $$\widehat{\al{L}}(\ak{sl}_r,\sigma^\pm)=\ak{sl}_r(\sr K)^{\sigma^\pm}\oplus\bb C,$$ where the actions of $\sigma^\pm$ on $\ak{sl}_r(\sr K)$ are given by $$\sigma^+(g(t))= -\,^tg(-t),$$
	$$\sigma^-(g(t))= -J_r\,^tg(-t)J_r^{-1}.$$
\end{lemm}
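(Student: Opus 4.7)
The strategy is to differentiate the explicit group-level lifts $\sigma^\pm$ of the involutions to $\widehat{\xb{SL}}_r(\sr K)$ (as spelled out in the paragraph preceding the lemma), identify the induced derivations on $\widehat{\ak{sl}_r}(\sr K)$, and then verify that these derivations act trivially on the one-dimensional center, so that the fixed subalgebra splits as the claimed direct sum with the prescribed involution on $\ak{sl}_r(\sr K)$.

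Differentiation itself is routine. Taking a first-order curve $\gamma(\epsilon)=I+\epsilon\,g(t)+O(\epsilon^2)$ in $\xb{SL}_r(R((t)))$ tangent to $g(t)\in\ak{sl}_r(\sr K)$, one computes
\[
\sigma^+(\gamma(\epsilon))=\bigl(I+\epsilon\,{}^tg(-t)\bigr)^{-1}+O(\epsilon^2)=I-\epsilon\,{}^tg(-t)+O(\epsilon^2),
\]
so $d\sigma^+(g)(t)=-\,^tg(-t)$; conjugating by $J_r$ throughout the same computation yields $d\sigma^-(g)(t)=-J_r\,^tg(-t)J_r^{-1}$. These are the two formulas asserted in the lemma on the $\ak{sl}_r(\sr K)$-summand, and they coincide with the involutions of Section~\ref{sec2} written in the form $x\otimes t^n\mapsto\tau(x)\otimes(-t)^n$ for $\tau(x)=-\,^tx$ and $\tau(x)=-J_r\,^txJ_r^{-1}$ respectively.

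The remaining step is to show that the induced involutions on $\widehat{\ak{sl}_r}(\sr K)$ fix the central $\bb C$ pointwise, so that the fixed subalgebra decomposes as $\ak{sl}_r(\sr K)^{\sigma^\pm}\oplus\bb C$. The cleanest route is to verify directly that the $2$-cocycle $\psi(g,h)=\x{Res}\bigl(\x{Tr}(\tfrac{dg}{dt}h)\bigr)$ defining the extension in (\ref{eq1}) is invariant under $d\sigma^\pm$: for $\sigma^+$ one combines $\tfrac{d}{dt}({}^tg(-t))=-\,^t(g')(-t)$, the identity $\x{Tr}(\,^tA\,^tB)=\x{Tr}(BA)$, and the substitution $t\leftrightarrow -t$ inside the residue to obtain $\psi(d\sigma^+g,\,d\sigma^+h)=\psi(g,h)$; since conjugation by $J_r$ preserves the trace, the identical computation gives invariance for $\sigma^-$. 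The induced involutions therefore act trivially on the central $\bb C$, hence $\widehat{\ak{sl}_r}(\sr K)^{\sigma^\pm}=\ak{sl}_r(\sr K)^{\sigma^\pm}\oplus\bb C$ with the restricted cocycle, which is precisely the twisted Kac–Moody algebra $\widehat{\al L}(\ak{sl}_r,\sigma^\pm)$ of type $A^{(2)}_{r-1}$ constructed in Section~\ref{sec2}. The one genuine subtlety is the cocycle invariance: this is why I would carry out the residue calculation explicitly rather than invoke a black-box functoriality argument for the Beauville–Laszlo extension.
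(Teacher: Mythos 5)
Your proof is correct and follows essentially the same route as the paper: the paper's entire argument is the first-order identity $^t(I_r+\varepsilon\alpha)^{-1}=I_r-\varepsilon\,^t\alpha$ with $\varepsilon^2=0$, which is exactly your differentiation of $\gamma(\epsilon)=I+\epsilon\,g(t)$. Your additional verification that the cocycle $\x{Res}(\x{Tr}(\tfrac{dg}{dt}h))$ is $\sigma^\pm$-invariant (so the center is fixed) is a detail the paper leaves implicit under ``straightforward,'' and it checks out.
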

\begin{proof}
	The proof is straightforward, %(using the results of \cite{EB} on the tangent space to the invariant locus), 
	we just remark  that $$^t(I_r+\varepsilon \alpha)^{-1}=I_r-\varepsilon \;^t\alpha,$$ where $\varepsilon^2=0$. 
\end{proof}
\section{Determinant and Pfaffian line bundles}
Let $T$ be a locally noetherian $\bb C-$scheme. Denote by $p_1$ and $p_2$ the projection maps from $X\times T$ to $X$ and $T$ respectively. Let  $\sr E$ be a vector bundle over $X\times T$. The derived direct image complex ${Rp_2}_*(\sr E)$ is represented by a complex of vector bundles $0\ra F_0\ra F_1\ra 0$. The line bundle $\al D_{\sr E}:=\x{det}(F_0)^{-1}\otimes \x{det}(F_1)$ over $T$ is independent of the choice of the representing complex and is called the \emph{determinant of cohomology} of $\sr E$. The determinant of  the universal family $\sr L$ over $X\times \sr{SU}_X(r)$ is called the \emph{determinant bundle} over  $\sr{SU}_X(r)$.

\subsection{Ramified case} Assume that $\pi:X\ra Y$ is ramified along a divisor $R$ and let  $\x{P}^-=\x{Nm}^{-1}(K_Y\Delta)$. 
\begin{prop}\label{pfaffian}
	Let $(\sr E,\psi)$ be a  family  of $\sigma-$alternating  vector bundles over $X$ parameterized by $T$,  with a $\sigma-$alternating non-degenerated form $\psi: \sigma^*\sr E\lra \sr E^*$. For any $L\in\x{P}^-$ let $\sr E_L=\sr E\otimes p_1^*L$.  Then the determinant of cohomology line bundle $\al D_{\sr E_L}$ admits a square root $\al P_{\sr E_L}$ which we call \emph{Pfaffian of cohomology} line  bundle. 
\end{prop}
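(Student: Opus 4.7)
The plan is to reduce the existence of the Pfaffian square root to the classical construction of Pfaffian line bundles associated to skew-symmetrically self-dual perfect complexes. The first step is to use the $\sigma$-alternating pairing $\psi$ and the defining condition $L\in\x{Nm}^{-1}(K_Y\Delta)$ to upgrade $\psi$ to a non-degenerate pairing between $\sr E_L$ and $\sigma^*\sr E_L$ valued in $p_1^*K_X$. Concretely, $\psi$ corresponds to a non-degenerate pairing $\sr E\otimes\sigma^*\sr E\to\al O_X$ which is antisymmetric in the $\sigma$-twisted sense; tensoring it with $p_1^*(L\otimes\sigma^*L)=p_1^*\pi^*\x{Nm}(L)=p_1^*\pi^*(K_Y\Delta)$ and combining the standard identification $\pi^*K_Y=K_X(-R)$ with $\pi^*\Delta\cong\al O_X(R)$ encoded in the choice of $\Delta$ as a square root of $\al O_Y(B)$ on $Y$, one obtains a non-degenerate antisymmetric pairing
$$\sr E_L\otimes\sigma^*\sr E_L\lra p_1^*K_X.$$

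Next, I apply relative Serre duality for the projection $p_2:X\times T\to T$. Grothendieck-Verdier duality provides a canonical isomorphism $(Rp_{2*}\sr E_L)^\vee[-1]\cong Rp_{2*}(\sr E_L^\vee\otimes p_1^*K_X)$, while the equality $p_2\circ(\sigma\times\x{id})=p_2$ gives $Rp_{2*}(\sigma^*\sr E_L)\cong Rp_{2*}\sr E_L$. Composing these with the identification $\sr E_L^\vee\otimes p_1^*K_X\cong\sigma^*\sr E_L$ supplied by the pairing constructed above, one obtains a canonical self-duality
$$(Rp_{2*}\sr E_L)^\vee[-1]\cong Rp_{2*}\sr E_L$$
of the perfect complex $Rp_{2*}\sr E_L$ on $T$, which inherits the antisymmetry of the underlying pairing on $X\times T$.

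Finally, I invoke the classical Pfaffian-of-complex construction (in the spirit of Quillen, Bismut-Freed and Laszlo-Sorger): any perfect complex on $T$ equipped with a skew-symmetric self-duality admits a canonical Pfaffian line bundle whose square is the determinant of the complex, and the construction is functorial under base change in $T$. Representing $Rp_{2*}\sr E_L$ locally by a two-term complex $[F_0\to F_1]$, the self-duality becomes an antisymmetric isomorphism which presents $\det F_0^{-1}\otimes\det F_1=\al D_{\sr E_L}$ as the square of a canonical section, i.e.\ produces the desired $\al P_{\sr E_L}$. The main obstacle will be the sign bookkeeping: verifying that the induced self-duality on $Rp_{2*}\sr E_L$ is genuinely skew-symmetric (rather than symmetric) is where the $\sigma$-alternating condition $\sigma^*\psi=-\,^t\psi$ enters in an essential way, distinguishing the present situation from the $\sigma$-symmetric case in which no Pfaffian square root of $\al D$ exists in general.
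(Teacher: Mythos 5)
Your overall strategy---produce a $p_1^*K_X$-valued pairing of $\sr E_L$ against $\sigma^*\sr E_L$, feed it through relative duality for $p_2$, and extract a Pfaffian from a self-dual representative of $Rp_{2*}\sr E_L$---is a plausible alternative to what the paper does, but as written it has a genuine gap exactly at the step you defer. The paper's proof instead pushes down to $Y$: using the projection formula and relative duality for $\pi$, it equips $\pi_*\sr E_L$ with a non-degenerate $K_Y$-valued bilinear form, obtained by composing $\pi_*\sr E_L\otimes\pi_*\sr E_L\ra\pi_*(p_1^*K_X)=q_1^*(K_Y\Delta)\oplus q_1^*K_Y$ with the projection onto the appropriate summand, and then verifies the one non-trivial point: this form is \emph{symmetric}, because the sign coming from the $\sigma$-alternating condition $\sigma^*\psi=-{}^t\psi$ cancels against the sign of the eigenspace projection in $\pi_*K_X$. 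After that it quotes the existing Pfaffian construction for families of $K_Y$-valued quadratic bundles (\cite[Proposition 7.9]{LS}) together with $\al D_{\sr E_L}=\al D_{\pi_*\sr E_L}$. In other words, the entire mathematical content of the proposition is a single sign computation converting ``$\sigma$-alternating upstairs'' into ``quadratic downstairs,'' after which a known theorem applies.

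Your proposal postpones precisely this computation (``the main obstacle will be the sign bookkeeping'') and replaces the quadratic-bundle Pfaffian by an appeal to a ``classical Pfaffian-of-complex construction'' for an arbitrary perfect complex with a skew-symmetric self-duality. Two problems. First, without the sign verification the argument proves nothing: the identical outline run with a $\sigma$-\emph{symmetric} $\psi$ produces a self-duality of $Rp_{2*}\sr E_L$ of the opposite type, and since your argument never determines which type actually arises, it would equally well ``show'' that $\al D$ has a square root on $\sr{SU}_X^{\sigma,+}(r)$ --- which Proposition \ref{picardgroup} shows is false. So the deferred step is not bookkeeping; it is the theorem. Second, the general principle you invoke is not available off the shelf in the stated generality: what the literature provides (Laszlo--Sorger and the sources they build on) is a Pfaffian for families of bundles on a curve carrying a non-degenerate \emph{symmetric} form valued in the canonical bundle, realized by locally presenting the direct image as $[E\xrightarrow{u}E^*]$ with $u$ antisymmetric; whether your derived-category self-duality can be rigidified to such a presentation, with a symmetry type independent of choices and compatible with base change, is again the same sign question plus a strictification argument you have not supplied. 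To complete the proof along your lines you would need to (i) fix the linearization identifying $L\otimes\sigma^*L$ with $p_1^*K_X$ and compute the symmetry of the induced pairing explicitly, and (ii) either descend to $Y$ --- at which point you recover the paper's proof --- or prove the strictification statement you are currently citing as folklore.
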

\begin{proof}
	Consider the family $\pi_*\sr E_L$ over $Y$. It is equipped with a non-degenerated quadratic form with values in $K_Y$. Indeed, by the projection formula, $\psi$ induces an isomorphism 
	\begin{align*}
		\pi_*\sr E_L & \cong \pi_*(\sigma^*\sr E_L) \\& \cong \pi_*(\sr E^*_L (q_1^{-1}(R)))\otimes q_1^* K_Y \\ &\cong (\pi_*\sr E_L)^*\otimes q_1^* K_Y,\end{align*}
	where the last isomorphism is the relative duality (see \cite[Ex. III.6.10]{HA}) and $q_1:Y\times T\ra Y$ is the first projection.  In fact the associated bilinear form is given by the composition $$\pi_*\sr E_L\otimes \pi_*\sr E_L\lra \pi_*(p_1^*K_X)=q_1^*(K_Y\otimes\Delta)\oplus q_1^*K_Y\lra q_1^*K_Y.$$ Since we project on the $-1$ eigenspace of the linearization on $K_X$ (recall that $\pi_*K_X=K_Y\Delta\oplus K_Y$)  and because $\psi$ is $\sigma-$alternating, we deduce that this bilinear form is symmetric.
	
	We can apply now \cite[Proposition $7.9$]{LS}  to get a square root of $\al D_{\pi_*\sr E_L}$. To finish the proof we just have to remark that $$\al D_{\sr E_L}=\al D_{\pi_*\sr E_L}.$$
\end{proof}

In particular, if we consider the universal family over $X\times \sr{SU}_X^{\sigma,-}(r)$, we get, for each $L\in \x{P}^-$, a Pfaffian of cohomology line bundle $\al P_L$ over $\sr{SU}_X^{\sigma,-}(r)$. 

On the other hand, consider the character $\chi:\widehat{\xb{SL}_r}(\sr O)\ra \bb G_m$ which is just the second projection (recall that $\widehat{\xb{SL}_r}(\sr O)$ splits). More precisely,  a point of $\widehat{\xb{SL}_r}(\sr O)$ can be represented locally on $\x{Spec}(R)$ by a pair $(\gamma,u)$, for $\gamma\in\x{SL}_r(R[[t]])$ and $u$ an automorphism of $\sr V_R$ such that $a(\gamma)\equiv u\mod \x{End}^f(\sr V_R)$. So $\chi$ sends this point to $\x{det}(a(\gamma)^{-1}u)$.   To this character one may associate a line bundle $\al L_{\chi}$ over $\sr Q$ (see \cite[\S3]{LB}). Moreover $\al{L}_\chi$ is isomorphic to the pullback of the determinant bundle.

\begin{lemm} 
	The restriction of the character $\chi$ to $\widehat{\xb{SL}_r}(\sr O)^{\sigma^-}$ has a square root which we denote by $\chi_{-}$. 
\end{lemm}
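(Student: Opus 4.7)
The plan is to bootstrap the square root from the geometric statement already proved in Proposition \ref{squareroot}. Recall that there we exhibited, for each $N$, a square root $\al O_{\sr Q^{\sigma,-}_N}(1)$ of the restriction of $\al O_{\sr Q_N}(1)$, obtained by identifying $\sr Q^{\sigma,-}_N$ as a subvariety of $\x{SO}_{2rN}(\bb C)/\x{P}'_N$ and pulling back the half-spin character $\chi_-$ of $\tilde{\x{P}}_N \subset \x{Spin}_{2rN}(\bb C)$. First I would pass to the direct limit over $N$ to obtain a square root $\al L_{\chi_-}$ of $\al L_\chi$ restricted to $\sr Q^{\sigma,-}$; the required compatibility in $N$ follows from the fact that the spin covers fit into a compatible system under the standard embeddings $\x{Spin}_{2rN} \hookrightarrow \x{Spin}_{2r(N+1)}$, so that the half-spin character constructed at each level agrees on overlaps.

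Second, I would use the $\widehat{\xb{SL}_r}(\sr K)$-equivariance of $\al L_\chi$ established in \cite{LB}, Section $4$: the action of $\widehat{\xb{SL}_r}(\sr O)$ on the fiber of $\al L_\chi$ over the base point $[\sr O^r] \in \sr Q$ is given precisely by the character $\chi$. Restricting this equivariant structure to the subgroup $\widehat{\xb{SL}_r}(\sr O)^{\sigma^-}$ endows $\al L_\chi|_{\sr Q^{\sigma,-}}$ with a canonical equivariance, and I would transfer this to the square root $\al L_{\chi_-}$. Since $\widehat{\xb{SL}_r}(\sr O)^{\sigma^-}$ is connected (as an extension of the connected ind-group $\xb{SL}_r(\sr O)^{\sigma^-}$ by $\bb G_m$), the equivariant structure on $\al L_{\chi_-}$ is uniquely determined by any lift of the action at the identity, and the induced action on the fiber at the base point defines a character $\chi_-$ with $\chi_-^2 = \chi$.

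The main obstacle is the compatibility between the two square roots: the spin-theoretic one of Proposition \ref{squareroot} acts through the image of $\widehat{\xb{SL}_r}(\sr O)^{\sigma^-}$ in $\tilde{\x{P}}_N \subset \x{Spin}_{2rN}$, and one must check that this image is the correct lift of the image in $\x{P}'_N \subset \x{SO}_{2rN}$, i.e.\ that the central $\bb G_m$ of $\widehat{\xb{SL}_r}(\sr O)^{\sigma^-}$ maps compatibly with the central $\mu_2$ of the spin cover. This can be verified at the Lie algebra level using the explicit cocycle $\psi(g,h)=\tfrac{1}{2}\x{Res}\,\x{Tr}(\tfrac{dg}{dt}h)$ from Section \ref{sec2}: the factor $1/2$ in the normalized bilinear form on $\widehat{\al L}(\ak{sl}_r, \sigma^-)$ (cf.\ Remark \ref{level}) is precisely what matches the spin normalization, so the central characters agree after doubling, confirming that $\chi_-$ is a genuine character of $\widehat{\xb{SL}_r}(\sr O)^{\sigma^-}$ and not merely of a double cover.
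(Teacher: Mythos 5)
Your proposal takes essentially the same route as the paper: the paper's proof simply observes that $\xb{SL}_r(\sr O)^{\sigma^-}$ is the direct limit of the parabolic subgroups $\x{P}'_N$ and takes the direct limit of the square roots constructed in Proposition \ref{squareroot}. Your version is correct and merely spells out the compatibility checks (the coherence of the spin covers over $N$ and the matching of the central $\bb G_m$ with the $\mu_2$ of the spin cover) that the paper leaves implicit.
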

\begin{proof}
	With the notations of the proof of Proposition \ref{squareroot}, one can see that $\xb{SL}_r(\sr O)^{\sigma^-}$ is the direct limit of the parabolic subgroups $\x{P}'_N$. So just take the direct limit in  %and $\widehat{\xb{SL}}_r(\sr O)^{\sigma^-}$ is the direct limit of $\tilde{\x{P}}_N$. From this the result follows by 
	Proposition \ref{squareroot}. 
\end{proof}
Let $\al L_{-}$ be the line bundle  over $\sr Q^{\sigma,-}$ defined by the character $\chi_{-}$ and  denote by  $q:\sr Q^{\sigma,-}\lra \sr{SU}_X^{\sigma,-}(r)$ the quotient maps (there should be no confusion about which map is considered).\\ Denote by $\al D$ the determinant of cohomology line bundle over $\sr{SU}_X^{\sigma,-}(r)$ and by $\al L$ its pullback to $\sr Q^{\sigma,-}$.

%Consider the commutative diagram $$\xymatrix{\sr Q^{\sigma,-}\ar[r]^f\ar[rd]^\phi\ar[d]_q & \sr Q\ar[d]\\ \sr{SU}^{\sigma,-}_X(r)\ar[r]& \sr{SU}_X(r).}$$ By \cite{LS}, \S6.4, the stack $\widehat{\xb{SL}_r}(\sr K)$ is the Mumford group $\ak{Mum}(\al L_\chi)$ associated to $\al L_\chi$, by functoriality of the Mumford group, we have $$\widehat{\xb{SL}_r}(\sr K)^{\sigma^-}=\ak{Mum}(f^*\al L_\chi).$$	Now, looking at Lie algebras, by Proposition \ref{twice}, we deduce that $f^*\al L_\chi=\al L_{-}^2$.

%\begin{lemm} \label{picard}	The pullback of the Pfaffian line bundles $\al P_L$ to $\sr{Q}^{\sigma,-}$ are independent  of $L$ and they are all isomorphic to $\al L_-$. %Moreover we have $$\x{Pic}(\sr{SU}_X^{\sigma,+}(r))=\bb Z\al D,\;\;\x{Pic}(\sr{SU}_X^{\sigma,-}(r))=\bb Z\al P.$$
%\end{lemm}
%\begin{proof}  It is known (see \cite{He}) that $\x{Pic}(\sr Q^{\sigma,-})=\bb Z^{N}$, for some integer $N$. Since all the $\al P_L$ are the square roots of the same line bundle $\al D$, the result follows.
%\end{proof}

%Remark that the line bundle $\al D$ over $\sr{SU}_X^{\sigma,+}(r)$ does not admit a square root in the ramified case. This can be deduced using the Hitchin system.  In fact, in \cite{Z}, we showed that $\sr{SU}_X^{\sigma,+}(r)$ is dominated by the intersection of two Prym varieties $\al P\cap \al Q$ which is not principally polarized (cf. \cite{Z} Theorem $4.16$).  

\begin{prop} \label{picardgroup}
Let $r\geqslant 3$. Then the isomorphism class of the Pfaffian bundles $\al P_L$  is independent  of $L$, so we just denote it by $\al P$, and its  pullback $q^*\al P$ is isomorphic to $\al L_{\chi_-}$. Moreover we have $$\x{Pic}(\sr{SU}_X^{\sigma,+}(r))=\bb Z\al D,\;\;\x{Pic}(\sr{SU}_{X,0}^{\sigma,-}(r))=\bb Z\al P.$$
\end{prop}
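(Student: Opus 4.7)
The strategy is to reduce all three claims to computations on the flag ind-variety $\sr Q^{\sigma,\pm}$ via the uniformization map $q:\sr Q^{\sigma,\pm}\to \sr{SU}_X^{\sigma,\pm}(r)$. Applying Heinloth's main theorem in \cite{He}, once one checks that the anti-invariant loop group $\xb{SL}_r(\sr A_p)^{\sigma^\pm}$ is connected and has no non-trivial characters (which is where the hypothesis $r\geqslant 3$ enters), one obtains an injection $q^*:\x{Pic}(\sr{SU}_X^{\sigma,\pm}(r))\hookrightarrow \x{Pic}(\sr Q^{\sigma,\pm})$ whose image consists of line bundles admitting an $\xb{SL}_r(\sr A_p)^{\sigma^\pm}$-equivariant structure.

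For the $+$ case, Proposition~\ref{isotropic} realises $\sr Q^{\sigma,+}$ as a closed ind-subvariety of the symplectic Grassmannian, from which one reads off $\x{Pic}(\sr Q^{\sigma,+})=\bb Z\al L_\chi$. Since the determinant of cohomology $\al D$ is already defined on the stack and pulls back to $\al L_\chi$ (this is the twisted analogue of the Beauville--Laszlo / Laszlo--Sorger identification, via the fact that the central character $\chi$ realises the determinant bundle on $\sr Q$), it generates the image of $q^*$, whence $\x{Pic}(\sr{SU}_X^{\sigma,+}(r))=\bb Z\al D$.

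For the $-$ case, Proposition~\ref{squareroot} gives $\x{Pic}(\sr Q^{\sigma,-})=\bb Z\al L_{\chi_-}$ with $\al L_{\chi_-}^{\otimes 2}=\al L_\chi$. For each $L\in\x{P}^-$, the Pfaffian satisfies $\al P_L^{\otimes 2}=\al D_L$, and $q^*\al D_L=\al L_\chi$ (any $L$-dependent factor in the determinant of cohomology is pulled back from a point and hence trivial on the connected ind-scheme $\sr Q^{\sigma,-}$, using Proposition~\ref{reduced}). Therefore $(q^*\al P_L)^{\otimes 2}=\al L_{\chi_-}^{\otimes 2}$, and since $\x{Pic}(\sr Q^{\sigma,-})\cong \bb Z$ has no $2$-torsion, one concludes $q^*\al P_L\cong \al L_{\chi_-}$ for every $L\in\x{P}^-$. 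Injectivity of $q^*$ then forces $\al P_L\cong \al P_{L'}$ for all $L,L'\in\x{P}^-$; write $\al P$ for this common class. The identity $q^*\al P=\al L_{\chi_-}$ combined with the fact that $\al L_{\chi_-}$ generates $\x{Pic}(\sr Q^{\sigma,-})$ then yields $\x{Pic}(\sr{SU}_{X,0}^{\sigma,-}(r))=\bb Z\al P$.

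The main obstacle is the Heinloth-type input: one must verify that $\xb{SL}_r(\sr A_p)^{\sigma^\pm}$ is sufficiently connected (no non-trivial characters, unique equivariant structure on each line bundle on $\sr Q^{\sigma,\pm}$), which is a non-trivial statement about fixed points of involutions on loop groups and is precisely where the rank hypothesis $r\geqslant 3$ is used. A secondary, more bookkeeping, issue is making the identification $q^*\al D_L=\al L_\chi$ on the nose, i.e.\ absorbing the $L$-dependence (the Euler-characteristic shift introduced by twisting with the fixed line bundle $L$) into a trivial pullback-from-a-point factor, so that the $2$-torsion argument for the Pfaffian yields equality rather than mere proportionality.
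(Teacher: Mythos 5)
Your overall strategy is sound but genuinely different from the paper's. The paper gets the rank of the Picard groups directly from Heinloth's exact sequence $0\to \prod_{p\in B}X^*(\al G_p)\to \x{Pic}(\sr M_Y(\al G))\to\bb Z\to 0$, computing $X^*(\al G_p)=0$ from the extension $0\to\x{Sym}_r^0\to\al G_p\to\x{SO}_r\to 0$ (unipotent kernel, semisimple quotient); it then identifies the generator by a \emph{global} geometric argument: $\sr{SU}_X^{\sigma,+}(r)$ is dominated by $\tilde{\x{P}}\cap\tilde{\x{Q}}$, whose polarization admits no square root, so $\al D$ (resp.\ $\al P$) has no root and must be primitive. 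You instead identify the generator \emph{locally}, by pinning down $q^*\al D$ and $q^*\al P_L$ inside $\x{Pic}(\sr Q^{\sigma,\pm})\cong\bb Z$ and using injectivity of $q^*$ (no characters of $\xb{SL}_r(\sr A_R)^{\sigma^\pm}$). Your torsion-freeness argument for $q^*\al P_L\cong\al L_{\chi_-}$, and the deduction of $L$-independence from injectivity of $q^*$, are valid and arguably give more (the explicit identification $q^*\al P\cong\al L_{\chi_-}$ claimed in the statement, which the paper asserts but does not really argue). The paper's $L$-independence argument is the softer one: $\x{P}^-$ is connected and $\x{Pic}$ is discrete.

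Two points in your write-up need repair. First, for the $+$ case you ``read off'' $\x{Pic}(\sr Q^{\sigma,+})=\bb Z\al L_\chi$ from the symplectic embedding, but nothing in the paper establishes this: Proposition~\ref{squareroot} treats only the $-$ case. You must supply the Lagrangian analogue, namely that the restriction of $\chi$ to $\x{P}_N\cap\x{Sp}_{2rN}$ already generates the character group (here simple connectedness of $\x{Sp}_{2rN}$ means no further square root appears), so that $\al L_\chi$ restricts to the \emph{primitive} generator rather than a multiple of it; without this, your argument only shows $\x{Pic}(\sr{SU}_X^{\sigma,+}(r))\supseteq\bb Z\al D$ inside $\bb Z$. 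Second, you locate the hypothesis $r\geqslant 3$ in the connectedness/character-triviality of $\xb{SL}_r(\sr A_p)^{\sigma^\pm}$; in the paper it enters elsewhere, namely in the semisimplicity of $\x{SO}_r$ (for $r=2$ the reductive quotient is a torus, $X^*(\al G_p)\neq 0$, and the Picard group is genuinely larger, as the Remark following the proposition indicates). In your framework the failure for $r=2$ would have to show up as nontrivial characters of the global group $\xb{SL}_2(\sr A_R)^{\sigma^\pm}$, so your parenthetical claim is not merely imprecise: it is the place where your proof would silently break for $r=2$, and you should trace through where small rank actually obstructs your descent argument.
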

\begin{proof}
  Since $\x{P}^-$ is connected and the group $Pic(\sr{SU}_X^{\sigma,-}(r))$ is discrete, we deduce that the map $L\ra \al P_L$ is constant. Hence all $\al P_L$ are isomorphic. \\ 
  Now, by  \cite[Proposition $2.3$]{Z2}, we have $$0\ra \x{Sym}_r^0(\bb C)\stackrel{i}{\ra} \al G_p\ra \x{SO}_r(\bb C)\ra 0.$$ Even though that this extension does not split, $\x{SO}_r$ can be embedded in $\al G_p$ via the map $g\ra g+\varepsilon \times 0$. Let  $X^*(\al G_p)$ is the character group of $\al G_p$ and $\lambda\in X^*(\al G_p)$. Since $\x{SO}_r(\bb C)$ is semi-simple, the restriction of $\lambda$ to $\x{SO}_r(\bb C)$ is trivial. Moreover, the restriction of $\chi$ to $i(\x{Sym}_r^0(\bb C))$ is trivial too because $\x{Sym}_r^0(\bb C)$ is a unipotent.

	Now, by \cite[Theorem $3$]{He}, we have $$0\ra \Pi_{p\in B}X^*(\al G_p)\ra \x{Pic}(\sr M_Y(\al G))\ra \bb Z\ra 0,$$ it follows that  $\x{Pic}(\sr{SU}_X^{\sigma,+}(r))\cong \bb Z$. It is well known that the pullback of the determinant line bundle over $\sr{SU}_X(r)$ to $ \sr Q$ is  $\al L_\chi$ (see for example \cite{LS}). Furthermore, $\al D$ has no square root. Indeed, by \cite[Theorem $4.16$]{Z}, the stack $\sr{SU}_X^{\sigma,+}(r)$ is dominated by the intersection of two Prym varieties $\tilde{ \x{P}}\cap \tilde{\x{Q}}$ which is not principally polarized. Hence if $\al D$ has a square root, it follows that the polarization of $\tilde{\x{P}}\cap\tilde{\x{Q}}$ has also a square root, which is not true. Thus  $\al D$ is a generator of $\x{Pic}(\sr{SU}_X^{\sigma,+}(r))$.  

With the same method we deduce that $\al P$ has no root, this implies that  that  $\x{Pic}(\sr{SU}_X^{\sigma,-}(r))\cong \bb Z\al P$. 
\end{proof}
\begin{rema}
	The case $r=2$ is special. In this case, the moduli stack $\sr{SU}_X^{\sigma,+}(2)$ is connected and $\sr{SU}_X^{\sigma,-}(2)$ has $2^{2n-1}$ connected component. It is pointed out in \cite{Z} that these moduli stacks can be identified with some moduli of parabolic rank $2$  bundles on $Y$. Hence one can deduce their Picard groups using \cite{LS}.\\
\end{rema}

\subsection{Unramified case}\label{unramifiedPfaffian}Assume here that $\pi:X\ra Y$ is \'etale. Let $\x{P}^+=\x P_+^{ev}\cup\x P_+^{od}=\x{Nm}^{-1}(K_Y)$ and $\x{P}^-=P^{a}_-\cup P_-^{b}=\x{Nm}^{-1}(K_Y\Delta)$.  Note the subscript in the $+$ case is canonical and it corresponds to the parity of $h^0$ of the line bundles. Then we have the following 
\begin{prop}\label{pfaffianetale}
	Let $(\sr E^\pm,\psi)$ be a  family  of $\sigma-$symmetric (resp. $\sigma-$alternating) vector bundles over $X$ parameterized by $T$,  with a $\sigma-$symmetric (resp. $\sigma-$alternating) non-degenerated form $\psi: \sigma^*\sr E^\pm\lra {\sr E^\pm}^*$. For any $L\in \x{P}^+$ (resp. $L\in \x{P}^-$) let $\sr E^\pm_L=\sr E^\pm\otimes p_1^*L$.  Then the determinant of cohomology line bundle $\al D_{\sr E^\pm_L}$ admits a square root $\al P^\pm_{\sr E^\pm_L}$ which we call \emph{Pfaffian of cohomology} line  bundle. 
\end{prop}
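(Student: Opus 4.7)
The plan is to adapt the proof of Proposition \ref{pfaffian} to the étale setting, replacing the ramification-twist bookkeeping by the easier identities available when $\omega_{X/Y}\cong \al O_X$. Concretely, we will push the family forward to $Y\times T$ and equip $\pi_*\sr E^{\pm}_L$ with a non-degenerate symmetric bilinear form valued in a line bundle on $Y$, then invoke the Pfaffian construction \cite[Proposition 7.9]{LS}.

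First, we compute the dual of $\sigma^*\sr E^{\pm}_L$. From $\psi:\sigma^*\sr E^{\pm}\cong(\sr E^{\pm})^*$ together with $L\otimes \sigma^*L\cong \pi^* M^{\pm}$, where $M^+=K_Y$ (resp.\ $M^-=K_Y\Delta$) corresponds to $L\in\x{P}^+$ (resp.\ $L\in\x{P}^-$), we obtain
$$\sigma^*\sr E^{\pm}_L \;\cong\; (\sr E^{\pm}_L)^*\otimes p_1^*\pi^*M^{\pm}.$$
Since $\pi$ is étale, relative duality reduces to $\pi_*(\sr F^*)\cong (\pi_*\sr F)^*$, and the projection formula then yields the isomorphism
$$\pi_*\sr E^{\pm}_L \;\cong\; \pi_*\sigma^*\sr E^{\pm}_L \;\cong\; (\pi_*\sr E^{\pm}_L)^*\otimes q_1^* M^{\pm},$$
i.e.\ a non-degenerate $M^{\pm}$-valued pairing on $\pi_*\sr E^{\pm}_L$.

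Next we check that this pairing is symmetric. This parallels the sign analysis in Proposition \ref{pfaffian} using $\pi_*\al O_X=\al O_Y\oplus\Delta$ and $\pi_*K_X=K_Y\oplus K_Y\Delta$, the $\sigma$-action being $+1$ on the first summand and $-1$ on the second. In the $+$ case, the resulting pairing lands canonically in the $+1$ eigenspace $K_Y$, and $\sigma$-symmetry of $\psi$ (sign $+1$) combined with the $+1$ eigenspace gives a symmetric form. In the $-$ case the pairing lands in the $-1$ eigenspace $K_Y\Delta$, and the two signs ($-1$ from $\psi$ being $\sigma$-alternating and $-1$ from the eigenspace) multiply to $+1$, so the form is again symmetric.

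Finally, we apply \cite[Proposition 7.9]{LS}, in the form that treats quadratic bundles valued in an arbitrary line bundle on $Y$, to obtain a square root of $\al D_{\pi_*\sr E^{\pm}_L}$, and conclude by the identity $\al D_{\sr E^{\pm}_L}=\al D_{\pi_*\sr E^{\pm}_L}$ (valid since $\pi$ is finite flat, so $\pi_*$ is exact). The main technical obstacle is the $-$ case, where the form is naturally valued in $K_Y\Delta$ rather than $K_Y$: one either extends the Laszlo--Sorger argument directly to $L$-valued quadratic bundles for a general line bundle $L$, or uses the canonical trivialization $\Delta^2\cong \al O_Y$ coming from the étale double cover to twist the bundle and convert the form into a $K_Y$-valued one, to which the original statement of \cite{LS} applies verbatim.
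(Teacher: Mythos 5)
Your overall strategy --- push the family forward to $Y$, produce a non-degenerate symmetric $K_Y$-valued form on $\pi_*\sr E^{\pm}_L$, and invoke \cite[Proposition $7.9$]{LS} together with $\al D_{\sr E^{\pm}_L}=\al D_{\pi_*\sr E^{\pm}_L}$ --- is exactly the paper's, and your $+$ case is correct. The $-$ case, however, contains an eigenspace error that then creates a spurious difficulty. In the decomposition $\pi_*\pi^*(K_Y\Delta)=K_Y\Delta\oplus K_Y$, the summand $K_Y\Delta$ is the $+1$ eigenspace (it is $K_Y\Delta\otimes\al O_Y$ inside $K_Y\Delta\otimes\pi_*\al O_X$) and $K_Y$ is the $-1$ eigenspace, not the other way around. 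The pairing furnished by relative duality is the invariant (trace) projection, so its symmetry type is the product of the sign of $\psi$, the sign of the norm pairing $L\otimes\sigma^*L\ra\pi^*\x{Nm}(L)$ (which for a line bundle is $\sigma$-symmetric), and $+1$ from the projection; for $\sigma$-alternating $\psi$ this product is $-1$, so the $K_Y\Delta$-valued form you construct is \emph{alternating}, not symmetric. The symmetric form is instead obtained from the \emph{anti-invariant} projection $\pi_*\pi^*(K_Y\Delta)\ra K_Y$, and it is automatically $K_Y$-valued. This is precisely the content of the paper's (very short) proof, which observes that for $L\in\x{P}^-$ the norm map induces an alternating $K_Y$-valued form on $\pi_*L$; tensoring an alternating form on $L$ against the $\sigma$-alternating $\psi$ yields a symmetric $K_Y$-valued form on $\pi_*\sr E^-_L$, with no leftover $\Delta$.

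Your fallback for the $-$ case also does not work as stated: twisting by $\Delta$ via $\Delta^2\cong\al O_Y$ does not change the value bundle of the form, since if $V\cong V^*\otimes K_Y\Delta$ then $V\otimes\Delta\cong(V\otimes\Delta)^*\otimes\Delta^2\otimes K_Y\Delta=(V\otimes\Delta)^*\otimes K_Y\Delta$. And the value bundle being exactly $K_Y$ cannot be finessed, because \cite[Proposition $7.9$]{LS} relies on Serre duality $h^0=h^1$ for a $K_Y$-selfdual bundle to make the determinant of cohomology admit a square root; for a form valued in $K_Y\Delta$ this fails. Once you replace the invariant projection by the anti-invariant one, the difficulty disappears and the rest of your argument closes exactly as in Proposition \ref{pfaffian}.
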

\begin{proof} The proof is similar to that of Proposition \ref{pfaffian}. We note just that when $L\in\x{P}^+$, the norm map induces a quadratic form on $\pi_*L$, and when it is in $\x{P}^-$, the induced form  is alternating.
	%Consider the family $\pi_*\sr E^\pm_L$ over $Y$. It is equipped with a non-degenerated quadratic form with values in $K_Y$. Indeed, by the projection formula, $\psi$ induces an isomorphism 
	%\begin{align*}
	%\pi_*\sr E_L & \cong \pi_*(\sigma^*\sr E_L) \\& \cong \pi_*(\sr E^*_L (q_1^{-1}(R)))\otimes q_1^* K_Y \\ &\cong (\pi_*\sr E_L)^*\otimes q_1^* K_Y,\end{align*}
	%where the last isomorphism is the relative duality (see \cite{HA} Ex III.6.10) and $q_1:Y\times T\ra Y$ is the first projection.  In fact the associated bilinear form is given by the composition $$\pi_*\sr E_L\otimes \pi_*\sr E_L\lra \pi_*(p_1^*K_X)=q_1^*(K_Y\otimes\Delta)\oplus q_1^*K_Y\lra q_1^*K_Y.$$ Since we project on the $-1$ eigenspace of the linearization on $K_X$ (recall that $\pi_*K_X=K_Y\Delta\oplus K_Y$)  and because $\psi$ is $\sigma-$alternating, we deduce that this bilinear form is symmetric.
	%We can apply now \cite{LS} Proposition $7.9$ to get a square root of $\al D_{\pi_*\sr E_L}$. To finish the proof we just have to remark that $$\al D_{\sr E_L}=\al D_{\pi_*\sr E_L}.$$
\end{proof}

Let $\sr U^\pm$ be  universal families on $\sr{SU}_X^{\sigma,+}(r)$ and $\sr{SU}_X^{\sigma,-}(2r)$, then for any $L\in P^\pm$, we have a Pfaffian of cohomology line bundle $\al P_L^\pm:=\x{Pf}(\sr{U}^\pm_L)$. Moreover, since $\x{Pic}(\sr{SU}_X^{\sigma,\pm}(r))$ are discrete groups (this can be deduced from the uniformization theorem for example), we deduce that there is at most two  isomorphism classes of $\al P^\pm_L$ parametrized by the connected components of $\x{P}^\pm$. So we denote them by $\al P_{ev}^\pm$ and $\al P_{od}^\pm$. \\

\section{Generalized theta functions and conformal blocks}

Assume in this section that the cover $\pi:X\ra Y$ is ramified. We have formulated the uniformization theorem over a single ramification point. However we can use a bunch of points to uniformize our moduli stack. If we consider all the ramification points $R$, then we get  the following   $$\sr{SU}_X^{\sigma,\pm}(r)\cong {\prod_{p\in R}\sr{Q}^{\sigma,\pm}_p}/\xb{SL}_r(\sr A_R)^{\sigma^\pm},$$ where $\sr{Q}^{\sigma,\pm}_p=\xb{SL}_r(\sr O_p)^{\sigma^\pm}\bs\xb{SL}_r(\sr K_p)^{\sigma^\pm}$, and $\sr{A}_R=H^0(X\sm R,\al O_X)$. Of course all $\sr{Q}_p^{\sigma,\pm}$ are isomorphic, but we emphasis on the fixed points. \\
Roughly speaking, this isomorphism  can be seen as follows: choose a formal neighborhood $D_p$ of each $p\in R$. Then giving a $\sigma-$symmetric vector bundle $(E,\psi)$ of trivial determinant, we choose a $\sigma-$invariant local trivializations $\varphi_p$ near each $p$ and a $\sigma-$invariant  trivialization $\varphi_0$ on $X\sm R$.  Then the corresponding point of the right hand side is just the class of $(\varphi_p\circ \varphi_0^{-1})_{p\in R}$. Conversely, giving a class of functions $(f_p)_{p\in R}$ of the RHS, we can construct a $\sigma-$symmetric vector bundle by gluing the trivial bundles on $D_p$ and $X\sm R$ using the functions $f_p$.\\

We have seen that the line bundle $\al L_{-}$ over $\sr{Q}_p^{\sigma,-}$ is isomorphic to $ q^*\al P$ and that the line bundle $\al L$ over $\sr{Q}_p^{\sigma,+}$ is isomorphic to $q^*\al D$. For $x\in R$, let $q_x:\prod_{p\in R}\sr{Q}^{\sigma,\pm}_p\ra \sr{Q}_x^{\sigma,\pm}$ be the canonical projection. We define the line bundles $$\sr L_-=\bigotimes_{p\in R}q_p^*\al{L}_- \x{  and  }  \sr L=\bigotimes_{p\in R}q_p^*\al L$$ over $\prod_{p\in R}\sr{Q}_p^{\sigma,-}$ and $\prod_{p\in R}\sr{Q}_p^{\sigma,+}$ respectively. One can see that $\sr L_-$ and $\sr L$ are in fact the pullback  via the projections $\prod_{p\in R}\sr Q_p^{\sigma,\pm}\ra \sr{SU}_X^{\sigma,\pm}(r)$ of the line bundles  $\al P$ and $\al D$ respectively.  In particular, both of these line bundles  have canonical $\xb{SL}(\sr A_R)^{\sigma^\pm}-$linearizations. In fact these are the only ones due to the following
\begin{prop}
	$\xb{SL}_r(\sr A_R)^{\sigma^\pm}$ are integral and they have only the trivial character.
\end{prop}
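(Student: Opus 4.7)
The plan is to treat the two claims separately: first integrality, then triviality of the character group.

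For integrality, I would identify $\xb{SL}_r(\sr A_R)^{\sigma^\pm}$ with $H^0(Y\sm B,\al G^\pm|_{Y\sm B})$ via the invariant Weil restriction used throughout the paper (with $\al G^+=\al G$, $\al G^-=\al H$). Over the affine open curve $Y\sm B$, these parahoric group schemes are smooth with connected semisimple fibers, étale-locally $\x{SO}_r$ or $\x{Sp}_r$. I would then write this space of sections as a filtered union of finite-type closed subschemes $V_N$, indexed by the allowed pole order at the points of $R$. Each $V_N$ sits as the fixed-point subscheme of a reductive $\bb Z/2$-action on the analogous bounded piece of $\xb{SL}_r(\sr A_R)$; since the latter is smooth and $\bb Z/2$ is linearly reductive in characteristic zero, $V_N$ is smooth, hence reduced. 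Connectedness of $V_N$ follows because $\xb{SL}_r(\sr A_R)^{\sigma^\pm}$ is a group ind-variety of sections into a connected semisimple group scheme over $Y\sm B$, so every point can be deformed to the identity.

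For the character claim, let $\chi$ be an algebraic character and consider the induced Lie algebra character $d\chi$ on $\ak{sl}_r(\sr A_R)^{\sigma^\pm}$. Decomposing $\ak{sl}_r=\ak k\oplus\ak p$ into the $(\pm 1)$-eigenspaces of $\sigma^\pm$ (so $\ak k=\ak{so}_r$ or $\ak{sp}_r$, and $\ak p$ is the complement), and decomposing $\sr A_R=\sr A_R^+\oplus\sr A_R^-$ into the $(\pm 1)$-eigenspaces of $\sigma$, the combined-invariance condition gives
\[
\ak{sl}_r(\sr A_R)^{\sigma^\pm}\;=\;(\ak k\otimes \sr A_R^+)\;\oplus\;(\ak p\otimes \sr A_R^-).
\]
I would show this Lie algebra is perfect. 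Since $\ak k$ is simple (for $r\geqslant 3$) we have $[\ak k,\ak k]=\ak k$, and since $1\in\sr A_R^+$, we get $[\ak k\otimes\sr A_R^+,\ak k\otimes\sr A_R^+]=\ak k\otimes\sr A_R^+$. For the odd summand, $\ak p$ is an irreducible $\ak k$-module (symmetric traceless matrices in the $\sigma^+$ case, Hermitian symplectic complement in the $\sigma^-$ case), so $[\ak k,\ak p]=\ak p$, and combined with $\sr A_R^+\cdot \sr A_R^-=\sr A_R^-$ (again using $1\in\sr A_R^+$) we conclude $[\ak k\otimes\sr A_R^+,\ak p\otimes\sr A_R^-]=\ak p\otimes\sr A_R^-$. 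Therefore the Lie algebra equals its own commutator, $d\chi=0$, and integrality/connectedness from the first part forces $\chi$ to be trivial.

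The main obstacle will be the ind-scheme bookkeeping in the integrality statement: reducedness in infinite type needs care, and I would appeal to Pappas-Rapoport (already invoked in Proposition \ref{reduced} to get reducedness of $\sr Q^{\sigma,\pm}$) to transfer reducedness of the ambient twisted loop groups to the section space over $Y\sm B$. The character part is essentially formal once the group is known to be connected and the Lie algebra identified; the only subtle point there, which I would address briefly, is that the low-rank case $r=2$ needs separate treatment since $\ak{so}_2$ is abelian, but this is already excluded by the standing hypothesis $r\geqslant 3$ from Proposition \ref{picardgroup}.
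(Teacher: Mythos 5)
Your treatment of the character is correct, and in fact slightly cleaner than the paper's. The paper decomposes the full algebra $\ak{sl}_r(\sr A_R)=\ak g_{-1}\oplus\ak g_1$ into $\sigma^\pm$-eigenspaces and deduces $\ak g_1=[\ak g_{-1},\ak g_{-1}]+[\ak g_1,\ak g_1]$ from perfectness of $\ak{sl}_r(\sr A_R)$; your finer decomposition $\ak{sl}_r(\sr A_R)^{\sigma^\pm}=(\ak k\otimes\sr A_R^+)\oplus(\ak p\otimes\sr A_R^-)$, together with $1\in\sr A_R^+$, $[\ak k,\ak k]=\ak k$ and $[\ak k,\ak p]=\ak p$, shows directly that the fixed subalgebra is perfect \emph{as a Lie algebra in its own right}, which is exactly what is needed to conclude $d\chi=0$ (a character of $\ak g_1$ is only automatically killed by $[\ak g_1,\ak g_1]$, not by $[\ak g_{-1},\ak g_{-1}]$). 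Your exclusion of $r=2$ is also the right caveat.

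The integrality half has two genuine problems. First, reducedness: your main argument takes for granted that the bounded pieces of $\xb{SL}_r(\sr A_R)$ are smooth, so that the $\bb Z/2$-fixed locus is smooth. But these finite-type pieces are not smooth in general; even the reducedness of the \emph{untwisted} ind-group $\xb{SL}_r(\sr A_R)$ is a theorem (equivalent, via local triviality of the uniformization map, to reducedness of the affine Grassmannian), not an input one can smooth against. Your fallback --- transferring reducedness of $\sr Q^{\sigma,\pm}$ (Pappas--Rapoport, Proposition \ref{reduced}) through the fppf-local triviality of $\prod_{p\in R}\sr Q_p^{\sigma,\pm}\ra\sr{SU}_X^{\sigma,\pm}(r)$ --- is exactly the paper's argument, so you should promote it from a hedge to the proof. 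Second, and more seriously, connectedness: ``every point can be deformed to the identity'' is precisely the assertion to be proved, and it does not follow formally from connectedness of the fibers of $\al G^\pm$ over the affine curve. The paper's proof has two nontrivial ingredients you omit: (i) enlarging $R$ by a $\sigma$-pair of points does not change $\pi_0$, because the quotient $\xb{SL}_r(\sr A_{R_i})^{\sigma^\pm}/\xb{SL}_r(\sr A_{R_{i-1}})^{\sigma^\pm}\cong(\sr Q_{p_i}\times\sr Q_{\sigma(p_i)})^{\sigma^\pm}\cong\sr Q_{p_i}$ is simply connected; and (ii) Steinberg's Kneser--Tits property for the quasi-split, simply connected special unitary group over the function field $\al K$, which lets one write $g=\prod_i\exp(N_i)$ with $N_i$ nilpotent in $\ak{sl}_r(\al K)^{\sigma^\pm}$ and contract it via $g_t=\prod_i\exp(tN_i)$ --- a path that lives only in the larger group $\x{SL}_r(\sr A_{R_k})^{\sigma^\pm}$ where $R_k$ absorbs the poles of the $N_i$. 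This last point also shows that ``each $V_N$ is connected'' is not the right statement to aim for: a point of $V_N$ is a priori only connected to the identity inside a larger piece of the ind-group. Without a substitute for Kneser--Tits, your connectedness claim is unsupported.
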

\begin{proof}The proof is inspired from \cite{LS}. \\ 
	Using the local triviality of the projection $\prod_{p\in R}\sr{Q}_p^{\sigma,\pm}\ra\sr{SU}^{\sigma,\pm}_X(r)$ and Proposition \ref{reduced} we deduce that $\xb{SL}_r(\sr A_R)^{\sigma^\pm}$ are reduced. 
	
	Now, since connected ind-groups are irreducible, it is sufficient to prove that $\xb{SL}_r(\sr A_R)^{\sigma^\pm}$ is connected. For a points $p_1, \dots,p_k\in X\sm R$ we denote by $R_i=R\cup\{p_1,\sigma(p_1),\dots,p_i,\sigma(p_i)\}$. We claim the following 
	\begin{claim}
		We have an isomorphism $$\xb{SL}_r(\sr A_{R_{i}})^{\sigma^\pm}/\xb{SL}_r(\sr{A}_{R_{i-1}})^{\sigma^\pm}\cong (\sr Q_{p_{i}}\times \sr Q_{\sigma(p_{i})})^{\sigma^\pm},$$
		where the action of $\sigma^\pm$ on the right hand side  is given by $\sigma^\pm(f,g)=(\sigma^\pm(g),\sigma^\pm(f))$.
	\end{claim}
	\begin{proof}
		We have a canonical map $\xb{SL}_r(\sr A_{R_{i}})^{\sigma^\pm}\ra (\sr{Q}_{p_i}\times\sr Q_{\sigma(p_i)})^{\sigma^\pm}$ which is clearly trivial on $\xb{SL}_r(\sr A_{R_{i-1}})^{\sigma^\pm}$. Hence we deduce a map $ \xb{SL}_r(\sr A_{R_i})^{\sigma^\pm}/\xb{SL}_r(\sr A_{R_{i-1}})^{\sigma^\pm}\ra(\sr{Q}_{p_i}\times\sr Q_{\sigma(p_i)})^{\sigma^\pm} $ which is actually injective. Now, by considering the uniformization over the two points $\{p_i,\sigma(p_i)\}$, we get $$\sr{SU}_X^{\sigma,\pm}(r)\cong (\sr{Q}_{p_i}\times\sr Q_{\sigma(p_i)})^{\sigma^\pm}/\xb{SL}_r(\sr A_{\{p_i,\sigma(p_i)\}})^{\sigma^\pm}.$$
		Hence, for an $\bb C-$algebra $S$, giving a point of $(\sr{Q}_{p_i}\times\sr Q_{\sigma(p_i)})^{\sigma^\pm}(S)$ is the same as giving an anti-invariant ($\sigma-$symmetric or $\sigma-$alternating following $\pm$) vector bundle $E$ over $X_S$ and  a $\sigma^\pm-$invariant trivialization $\delta:E|_{X_S^*}\ra X_S^*\times \bb C^r$, where  $X_S^*=X_S\sm\{p_i,\sigma(p_i)\}$. For an $S-$algebra $S'$,  let $T(S')$ be the space of $\sigma^\pm-$invariant trivializations of $E_{S'}$ over $X_{S,i-1}=X_S\sm R_{i-1}$. Then $\xb{SL}_r(\sr A_{R_{i-1}})^{\sigma^\pm}$ acts on $T$, and in fact it is a torsor under that group. Moreover $\delta$ induces a map $\tilde{\delta}: T\ra \xb{SL}_r(\sr A_{R_i})^{\sigma^\pm}$ by sending a trivialization $\phi$ to $\phi\circ \delta^{-1}$.  Associating to $(E,\delta)$ the map $\tilde{\delta}$ gives an inverse to the above inclusion. 
	\end{proof}
	
	It is clear to see that $(\sr{Q}_{p_i}\times\sr Q_{\sigma(p_i)})^{\sigma^\pm}\cong \sr Q_{p_i}=\xb{SL}_r(\sr O_{p_i})\bs \xb{SL}_(\sr K_{p_i})$ which is simply connected. So using the homotopy  exact sequence, we deduce that $$\pi_0(\xb{SL}_r(\sr A_{R_{i}}))=\pi_0(\xb{SL}_r(\sr A_{R_{i-1}})).$$ 
	Now let $g\in \x{SL}_r(\sr A_R)^{\sigma^\pm}$ and consider $g$ as an element of $\x{SL}_r(\al K)^{\sigma^\pm}$, where $\al K$ is the function field of $X$. By \cite{T} (see also \cite{PR} Section $4$), we know that the special unitary groups are simply connected and quasi-split. Steinberg (\cite{St}) has showed the Kneser-Tits property for quasi-split simply connected groups over any field (Recall that this property means that these groups are generated by the unipotent radicals of their standard parabolic subgroups).  So applying that to  $\x{SL}_r(\al K)^{\sigma^\pm}$, we can assume that $g=\prod_{i}\exp(N_i)$, where 
	%$f_i\in K_X$ are $\sigma^\pm-$invariant and 
	$N_i$ are nilpotent elements of $\ak{sl}_r(\al K)^{\sigma^\pm}$. Let $\{p_1,\dots,p_k\}$ be the set of poles of  the $N_i$'s. For $t\in \bb A^1$, we let  $g_t=\prod_{i}\exp(tN_i)$. Then for any $t\in \bb A^1$ we see that  $g_t\in \x{SL}_r(\sr A_{R_k})^{\sigma^\pm}$ and $t\ra g_t$ is a path in $\x{SL}_r(\sr A_{R_k})^{\sigma^\pm}$ that relates $g$ to the identity. Hence $\xb{SL}_r(\sr A_{R_k})^{\sigma^\pm}$ is connected. So the same is true for $\xb{SL}_r(\sr A_R)^{\sigma^\pm}$ by what we have shown above.\\
	
	%Let $A(t)\in\xb{SL}_r(\sr A_X)^\sigma$, for $z\in [0,1]$, define $B(z)=A(z^{-1}t)$, then $B(1)=A(t)$, $B(0)\in \x{SO}_r(\bb C)$. In particular, since $\x{SO}_r(\bb C)$ is connected, there is a path from $B(0)$ to $I_r$, hence we deduce a path from $A(t)$ to $I_r$. %This is not difficult. In fact $\x{SL}(\sr A_{X_R})^\sigma$ can be seen as the orthonormal basis of $\sr A_{X_R}^r$ with respect to the $\sigma-$symmetric bilinear form given by $$(v(t),w(t))\lra\;^tv(-t)\cdot w(t).$$ 
	
	Now let $\lambda$ be a character of $\x{SL}_r(\sr A_R)^{\sigma^\pm}$, seeing $\lambda$ as a function, we consider its derivative at the identity which turns out to be a Lie algebras morphism from $\ak{sl}_r(\sr A_R)^{\sigma^\pm}$ to the trivial algebra $\bb C$. However, the affine algebra  $\ak{sl}_r(\sr A_R)^{\sigma^\pm}$ equals the direct sum of two commutator subalgebras. Indeed, the algebra  $\ak{sl}_r(\sr A_R)$ equals to its commutator, and we have eigenspace decomposition with respect to $\sigma^\pm$ $$\ak{sl}_r(\sr A_R)=\ak g_{-1}\oplus \ak{g}_1,$$ it follows  \begin{align*}
		[\ak g_{-1}\oplus \ak g_1,\ak g_{-1}\oplus \ak g_1]&= [\ak g_{-1},\ak g_{-1}]\oplus[\ak g_{-1},\ak g_{1}]\oplus[\ak g_{1},\ak g_{-1}]\oplus[\ak g_{1},\ak g_{1}].
	\end{align*}
	Hence $\ak{sl}_r(\sr A_R)^{\sigma^\pm}=\ak g_1=[\ak g_{-1},\ak g_{-1}]\oplus[\ak g_{1},\ak g_{1}].$ So the derivative of $\lambda$ at the identity is zero. Since $\lambda$ is a group homomorphism, its derivative  is identically zero everywhere. Since $\xb{SL}_r(\sr A_X)^{\sigma^\pm}$ is integral, we can write it as limit of integral varieties $V_n$ and for $n$ large $1\in V_n$, so $\lambda|_{V_n}=1$, hence $\lambda=1$.
	%By \cite{LB}, $\xb{SL}_r(\sr A_X)$ is the limit of a sequence $V_n$ of integral varieties, hence $\xb{SL}_r(\sr A_X)^\sigma$ is the limit of $V_n^\sigma$. Moreover, the varieties $V_n^\sigma$ are integral. Indeed, locally we have $$\x{Spec}(R)^\sigma=\x{Spec}(R/I),$$ where $I$ is the ideal of $R$ generated by $\{g-\sigma(g)|g\in R\}$. $I$ is reduced, so $V_n$ is reduced. ??
\end{proof}

Fix an integer $k>0$. For any dominant weight $\lambda^\pm\in \x{P}^{\sigma,\pm}$,  there is a line bundle $\sr{L}(\lambda^\pm)$ over $\sr{Q}^{\sigma,\pm}$ associated to the principal $\xb{SL}_r(\sr O)^{\sigma^\pm}-$bundle:  $$\xb{SL}_r(\sr K)^{\sigma^\pm}\lra \sr{Q}^{\sigma,\pm},$$ defined using the character $e^{-\lambda^\pm}$ on $\xb{SL}_r(\sr O)^{\sigma^\pm}$.  Further, it is shown in \cite{Ku} that the space of global sections of powers of $\sr L(\lambda)$ is isomorphic to the dual of the irreducible  highest integrable representation of $\widehat{\al L}(\ak{sl}_r,\sigma^\pm)$ associated to $\lambda^\pm$.   \\ We are mainly interested in the case where $\lambda^\pm=\lambda_0^\pm$. Denote by $\al H_{\pm}(k)$ the highest weight representation of level $k$  of $\widehat{\al L}(\ak{sl}_r,\sigma^\pm)$ associated to the weight $\lambda_0^\pm$. It is called the basic representation of level $k$. So the above result of \cite{Ku} (see also \cite{Ma}) can be formulated as follows
\begin{theo}[Kumar, Mathieu] \label{Kum} 
	\begin{enumerate}
		\item The space  $H^0(\sr Q^{\sigma,-}, q^*\al P^k)$ is canonically isomorphic, as $\widehat{\al L}(\ak{sl}_r,\sigma^-)-$module, to the dual of the basic representation $\al H_-(k)$.
		\item The space  $H^0(\sr Q^{\sigma,+}, q^*\al D^k)$ is canonically isomorphic, as $\widehat{\al L}(\ak{sl}_r,\sigma^+)-$module, to the dual of the basic representation $\al H_+(k)$.
	\end{enumerate}
\end{theo}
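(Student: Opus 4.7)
The plan is to reduce both statements to the Borel-Weil-type theorem of Kumar and Mathieu for ind-schemes of the form $\mathcal{G}/\mathcal{P}$, where $\mathcal{G}$ is a (minimal) Kac-Moody ind-group and $\mathcal{P}$ a standard parabolic. This theorem asserts that for every dominant integral weight $\lambda$ of $\mathcal{G}$ vanishing on the Levi of $\mathcal{P}$, the line bundle $\sr L(\lambda)$ associated to the character $e^{-\lambda}$ on $\mathcal{G}/\mathcal{P}$ has space of global sections canonically isomorphic to the dual of the irreducible integrable highest weight module $L(\lambda)$, as a module over the Kac-Moody algebra.

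First I would identify the ind-variety $\sr Q^{\sigma,\pm}$ with the maximal partial flag variety $\mathcal{G}_\pm/\mathcal{P}_\pm$ attached to the twisted affine Kac-Moody algebra $\widehat{\al L}(\ak{sl}_r,\sigma^\pm)$, where $\mathcal{P}_\pm$ corresponds to removing the affine node $0$ from the Dynkin diagram of type $A^{(2)}_{r-1}$. This follows from the realization of the twisted loop group $\widehat{\xb{SL}}_r(\sr K)^{\sigma^\pm}$ as the minimal Kac-Moody ind-group attached to $\widehat{\al L}(\ak{sl}_r,\sigma^\pm)$, together with the identification of $\widehat{\xb{SL}}_r(\sr O)^{\sigma^\pm}$ as the corresponding maximal parahoric, both recalled in the preceding sections.

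The second, and main, step is the identification of line bundles. For the $\sigma^+$ case, the pullback $\al L = q^*\al D$ is by construction the line bundle associated to the determinantal character $\chi$ of $\widehat{\xb{SL}}_r(\sr O)^{\sigma^+}$; one checks that $\chi$ restricts to the fundamental weight $\lambda_0^+$ on the Cartan, so that $q^*\al D^k \cong \sr L(k\lambda_0^+)$. For the $\sigma^-$ case, the square-root character $\chi_-$ obtained via the Spin cover in Proposition \ref{squareroot} corresponds, by construction, to the fundamental weight $\lambda_0^-$ of $\widehat{\al L}(\ak{sl}_r,\sigma^-)$, consistently with Remark \ref{level}, where $\lambda_0^+$ has level $a_0^\vee = 2$ while $\lambda_0^-$ has level $1$. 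Hence $q^*\al P^k \cong \sr L(k\lambda_0^-)$.

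Applying the Kumar-Mathieu theorem to the weights $k\lambda_0^\pm$ then yields the claimed canonical isomorphisms, the $\widehat{\al L}(\ak{sl}_r,\sigma^\pm)$-module structure on the left-hand sides being induced by the infinitesimal action of the twisted loop group on the equivariant line bundles $\sr L(k\lambda_0^\pm)$. The main obstacle is the explicit matching of the geometrically defined characters $\chi$ and $\chi_-$ with the representation-theoretic fundamental weights $\lambda_0^+$ and $\lambda_0^-$ on the Cartan $\ak h \oplus \bb C K$; this reduces to evaluating these characters on the simple coroots written down in Section \ref{sec2} and comparing the result with the dual Coxeter coefficients of Table (\ref{table}), in particular tracking the factor of two between $\lambda_0^+$ and $2\lambda_0^-$ alluded to in the preliminaries.
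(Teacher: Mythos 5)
Your proposal matches the paper's treatment: the theorem is stated there as a direct consequence of the Borel--Weil theorem of Kumar and Mathieu for Kac--Moody flag varieties, after identifying $\sr Q^{\sigma,\pm}$ with the partial flag variety of $\widehat{\al L}(\ak{sl}_r,\sigma^\pm)$ at the affine node and identifying $q^*\al P$ and $q^*\al D$ with $\sr L(\lambda_0^-)$ and $\sr L(\lambda_0^+)$, exactly as you outline. The explicit matching of the characters $\chi$ and $\chi_-$ with the fundamental weights, which you rightly flag as the main point to verify, is likewise not carried out in detail in the paper, which only records the level bookkeeping via Remark \ref{level} and Table (\ref{table}) and the compatibility of Kumar's ind-variety structure with the one used here.
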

Note that by Remark \ref{level}, when $r$ is even,  the weight $\lambda_0^+$ has level $2$ while $\lambda_0^-$ is of level $1$. This explains why we have to take the determinant line bundle in $\sigma^+$ case and the Pfaffian line bundle in $\sigma^-$ case. \\

The point that should be stressed here is that in \cite{Ku}, Kumar  has defined the ind-group $\xb{SL}_r(\sr O)\bs\xb{SL}_r(\sr K)$ using representation theory of Kac-Moody algebras. It is shown in \cite{LB} that this construction coincides with the usual functorial definition. Moreover, Pappas and Rapoport have claimed in \cite{PR} (page $3$) that the constructions of Kumar coincide with their definitions of the Schubert varieties. In particular, we deduce  in our  spacial case that the ind-variety structure on the twisted flag varieties  $\sr{Q}^{\sigma,\pm}$ are the same as those defined by Kumar. \\

Using the above results and assumptions, we deduce the following result which is a special case of   Conjecture $3.7$ of  Pappas and  Rapoport (\cite{PR}).
\begin{prop} \label{descent}
	We have isomorphisms $$H^0(\sr{SU}_X^{\sigma,-}(r),\al P^k)\cong \left(\prod_{p\in R}H^0(\sr Q_p^{\sigma,-},\al L_{-}^k)\right)^{\ak{sl}_r(\sr A_R)^{\sigma^-}},$$ $$H^0(\sr{SU}_X^{\sigma,+}(r),\al D^k)\cong\left( \prod_{p\in R}H^0(\sr Q_p^{\sigma,+},\al L^k)\right)^{\ak{sl}_r(\sr A_R)^{\sigma^+}}.$$
\end{prop}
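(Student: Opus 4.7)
The plan is to apply fppf descent in the uniformization picture, decompose the result by a Künneth formula, and finally swap group invariants for Lie algebra invariants. I treat the two cases in parallel; let $\al M$ denote either $\al P$ (case $\sigma^-$) or $\al D$ (case $\sigma^+$), let $\sr L_{\al M}$ denote the corresponding line bundle ($\sr L_-$ or $\sr L$) on $\prod_{p\in R}\sr Q_p^{\sigma,\pm}$, and write $\xb G=\xb{SL}_r(\sr A_R)^{\sigma^\pm}$.

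First, the uniformization theorem extended to the entire ramification locus realizes $\sr{SU}_X^{\sigma,\pm}(r)$ as the quotient stack $[\prod_{p\in R}\sr Q_p^{\sigma,\pm}/\xb G]$, with quotient map locally trivial in the fppf topology. As observed in the paragraph preceding the statement, $\sr L_{\al M}^k$ is the pullback of $\al M^k$ and carries a canonical $\xb G$-linearization; moreover the preceding proposition tells us that $\xb G$ admits only the trivial character, so this linearization is the unique one. Standard fppf descent then gives
$$H^0(\sr{SU}_X^{\sigma,\pm}(r),\al M^k)\cong H^0\!\left(\prod_{p\in R}\sr Q_p^{\sigma,\pm},\sr L_{\al M}^k\right)^{\xb G}.$$

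Next, I would apply a Künneth formula to the external tensor product $\sr L_{\al M}^k=\bigotimes_{p\in R}q_p^*\al L_{\al M}^k$ on the product of the $\sr Q_p^{\sigma,\pm}$, yielding
$$H^0\!\left(\prod_{p\in R}\sr Q_p^{\sigma,\pm},\sr L_{\al M}^k\right)\cong \bigotimes_{p\in R}H^0(\sr Q_p^{\sigma,\pm},\al L_{\al M}^k).$$
Each factor is identified with the dual of the basic representation of level $k$ by Theorem \ref{Kum}, so it is a pro-finite-dimensional module obtained as an inverse limit over the finite-level approximations $\sr Q_{p,N}^{\sigma,\pm}$. The Künneth identification is then reduced to its classical version on each projective piece of the ind-exhaustion and passed to the limit.

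Finally, by the proposition just proved, $\xb G$ is a connected, integral ind-group whose character group is trivial. Hence a section is $\xb G$-invariant if and only if it is annihilated by the infinitesimal action of the Lie algebra $\ak{sl}_r(\sr A_R)^{\sigma^\pm}$, and combining the three identifications gives the claim. The main obstacle is precisely this last step: the $\xb G$-action on $\bigotimes_p H^0(\sr Q_p^{\sigma,\pm},\al L_{\al M}^k)$ is a priori only an action of an infinite-dimensional ind-group on a pro-object, and one must justify differentiating and re-integrating to reduce to the Lie algebra. The connectedness and character-triviality of $\xb G$ established in the previous proposition are exactly what is needed to conclude, as in the analogous untwisted case of \cite{LS}, that invariants under the group and invariants under its Lie algebra agree.
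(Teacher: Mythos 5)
Your argument is correct and is essentially the paper's own proof: the paper simply compresses your three steps (fppf descent to $\xb G$-invariants, K\"unneth over the product of the $\sr Q_p^{\sigma,\pm}$, and passage from group invariants to $\ak{sl}_r(\sr A_R)^{\sigma^\pm}$-invariants via integrality of the group and of the flag ind-varieties) into a single citation of \cite[Proposition 7.4]{BL} together with the K\"unneth formula. The integrality and trivial-character facts you invoke are exactly the hypotheses the paper established in the preceding propositions for this purpose.
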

\begin{proof} Since $\xb{SL}_r(\sr A_R)^{\sigma^\pm}$ and $\sr{Q}^{\sigma^\pm}$ are integral, the result follows, using the K\"unneth formula, from \cite[Proposition $7.4$]{BL}. 
\end{proof}

Now, Lemma \ref{descent} and Theorem \ref{Kum} imply  our main result
\begin{theo}Let $k\in \bb N$, we have 
	\begin{enumerate}
		\item  The space of global sections $H^0(\sr{SU}_X^{\sigma,-}(r),\al P^k)$ is canonically isomorphic to the conformal block space $\al V_{\sigma,-}(k)$.
		\item The space of global sections $H^0(\sr{SU}_X^{\sigma,+}(r),\al D^k)$ is canonically isomorphic to the conformal block space $\al V_{\sigma,+}(k)$.\\
	\end{enumerate}
\end{theo}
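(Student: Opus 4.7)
The plan is to deduce the theorem by chaining together the two main ingredients already assembled, namely the descent formula of Proposition \ref{descent} and the Kumar--Mathieu Theorem \ref{Kum}, and then invoking the standard duality between invariants of a dual module and the dual of coinvariants.

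First I would apply Proposition \ref{descent} to rewrite
$$H^0(\sr{SU}_X^{\sigma,-}(r),\al P^k) \cong \Bigl(\bigotimes_{p\in R} H^0(\sr Q_p^{\sigma,-},\al L_{-}^k)\Bigr)^{\ak{sl}_r(\sr A_R)^{\sigma^-}},$$
and likewise for the $\sigma^+$ case, with $\al L$ and $\al D$ in place of $\al L_-$ and $\al P$. Next, at each ramification point $p\in R$, Theorem \ref{Kum} provides a canonical isomorphism of $\widehat{\al L}(\ak{sl}_r,\sigma^\pm)$-modules between $H^0(\sr Q_p^{\sigma,\pm},\al L_\pm^k)$ (respectively $H^0(\sr Q_p^{\sigma,+},\al L^k)$) and the dual $\al H_\pm(k)^*$ of the basic level-$k$ representation. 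Taking the tensor product over $p\in R$ and using that, for the vector $\overrightarrow{\mu}^\pm = (\lambda_0^\pm,\ldots,\lambda_0^\pm)$, one has $\al H_{\overrightarrow{\mu}^\pm}(k) = \al H_\pm(k)^{\otimes 2n}$, I would identify
$$\bigotimes_{p\in R} H^0(\sr Q_p^{\sigma,\pm},\al L_\pm^k) \cong \al H_{\overrightarrow{\mu}^\pm}(k)^*.$$

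Finally, I would invoke the elementary identity that, for any Lie algebra $\ak g$ acting on a vector space $V$, one has a canonical isomorphism $(V^*)^{\ak g} \cong (V_{\ak g})^*$, where $V_{\ak g}$ denotes the space of coinvariants. Applying this to $V = \al H_{\overrightarrow{\mu}^\pm}(k)$ and $\ak g = \ak{sl}_r(\sr A_R)^{\sigma^\pm}$ (the action on the tensor product being the diagonal one, by the last displayed formula in Section \ref{sec2}) transforms the invariants above into
$$\bigl[\bigl(\al H_{\overrightarrow{\mu}^\pm}(k)\bigr)_{\ak{sl}_r(\sr A_R)^{\sigma^\pm}}\bigr]^* = \al V_{\sigma,\pm}(k),$$
which is exactly the definition of the conformal block.

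The only point requiring care is the compatibility of the various $\ak{sl}_r(\sr A_R)^{\sigma^\pm}$-actions: the one on $H^0(\sr{SU}_X^{\sigma,\pm}(r),\cdot)$ transported through Proposition \ref{descent}, the one on each $H^0(\sr Q_p^{\sigma,\pm},\cdot)$ obtained by restricting the $\widehat{\al L}(\ak{sl}_r,\sigma^\pm)$-action along $\ak{sl}_r(\sr A_R)^{\sigma^\pm} \hookrightarrow \ak{sl}_r(\sr K_p)^{\sigma^\pm}$, and the diagonal action used to define $\al V_{\sigma,\pm}(k)$. I expect this to be the main (but mild) obstacle: one must check that the central extension used in Section 3 lifts consistently to each factor, so that the diagonal action of $\ak{sl}_r(\sr A_R)^{\sigma^\pm}$ has trivial central term and therefore coincides with the one coming from the uniformization. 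Given the explicit cocycle recalled in Section \ref{sec2} and the residue-sum identity $\sum_{p\in R}\x{Res}_p(\x{Tr}(\tfrac{d\alpha}{dt}\beta)) = 0$ for $\alpha,\beta$ regular on $X\sm R$, this compatibility is essentially automatic, and the theorem follows at once.
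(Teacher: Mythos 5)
Your proposal is correct and follows exactly the route the paper takes: the paper's own proof is the one-line observation that Proposition \ref{descent} combined with Theorem \ref{Kum} yields the result, and your chain (descent to invariants, Kumar--Mathieu identification of each factor with $\al H_\pm(k)^*$, then the canonical isomorphism $(V^*)^{\ak g}\cong (V_{\ak g})^*$) is precisely the intended argument, spelled out in more detail. Your closing remark about the central extension splitting over $\ak{sl}_r(\sr A_R)^{\sigma^\pm}$ via the residue theorem is a genuine compatibility point that the paper leaves implicit, so including it only strengthens the write-up.
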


\section{Strange duality at level one} In this section, we  show a strange duality at level one for the moduli spaces $\al{SU}_X^{\sigma,+}(r)$ of $\sigma-$symmetric vector bundles in the unramiffied case. Since $\al{U}_X^{\sigma,-}(r)\cong \al{U}_X^{\sigma,+}(r)$, similar results hold for the $\sigma-$alternating case.

So assume that $\pi:X\ra Y$ is \'etale. Let $\Delta\in J_Y[2]$ the line bundle associated to this cover.  We use notation from subsection \ref{unramifiedPfaffian}. In particular, since we will deal just with the $\sigma-$symmetric case, we shall denote simply by $\x{P}^{ev}$ the space $\x{P}^{ev}_+$ and by $\al P_{ev}$ the isomorphism class of the Pfaffian bundles $\al P_L$, for $L\in \x{P}^{ev}$.

The moduli stack $\sr{U}_X^{\sigma,+}(r)$ of $\sigma-$symmetric bundles has two connected components distinguished by the parity of $h^0(E\otimes L)$, for fixed $L\in \x{P}^{ev}$. The even connected component is denoted $\sr U_{X,0}^{\sigma,+}(r)$. The moduli $\sr{SU}_X^{\sigma,+}(r)$ are connected. The associated moduli spaces has been constructed in \cite{Z2}. Here we consider the moduli spaces $\al{U}_{X}^{\sigma,+}(r)$ (resp. $\al{SU}_X^{\sigma,+}(r)$) of \emph{stable} $\sigma-$symmetric vector bundles (resp. with trivial determinant). 
\begin{lemm}\label{descendpfaffian}
	The Pfaffian line bundle $\al P_{ev}$ over $\sr{U}_{X,0}^{\sigma,+}(r)$ descends to the moduli space $\al{U}_{X,0}^{\sigma,+}(r)$.%, where $\al{U}_{X,0}^{\sigma,+}(r)$ is the connected component of the moduli space $\al{U}_X^{\sigma,+}(r)$ of $\sigma-$symmetric stable vector bundles with determinant in  the connected Prym variety $\x{P}$ of $\pi:X\ra Y$.
\end{lemm}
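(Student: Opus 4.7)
The plan is to apply the standard descent criterion of Kempf (as used by Drezet--Narasimhan and by Laszlo--Sorger in \cite{LS}): a line bundle on the moduli stack of stable objects descends to the coarse moduli space if and only if, at every closed point $(E,\psi)$, the stabilizer $\x{Aut}(E,\psi)$ acts trivially on the fibre.

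First I would compute $\x{Aut}(E,\psi)$ for a stable $\sigma$-symmetric bundle. Since $E$ is stable as a vector bundle, $\x{Aut}(E)=\bb C^\times$; a scalar $\lambda\cdot \x{id}$ commutes with $\psi:\sigma^*E\ra E^*$ precisely when $\lambda^{2}=1$, so $\x{Aut}(E,\psi)=\mu_2=\{\pm 1\}$. Hence the question reduces to whether $-1$ acts trivially on the fibres of $\al P_{ev}$.

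Next I would control the action of $-1$ on the fibre of $\al D_{\sr E\otimes p_1^*L}$, noting that $\al P_{ev}^{\otimes 2}=\al D_{\sr E\otimes p_1^*L}$ and that a scalar $\lambda$ acts on the determinant of cohomology with weight $-\chi(E\otimes L)$. In the \'etale situation $g_X=2g_Y-1$ and $\deg L=2g_Y-2$ (because $L\in \x{Nm}^{-1}(K_Y)$), whence $\chi(E\otimes L)=r(1-g_X)+r\deg L=0$. Consequently $-1$ acts trivially on $\al D_{\sr E_L}$, showing in passing that $\al D_{\sr E_L}$ itself descends, and the action of $-1$ on $\al P_{ev}$ is by some sign $\epsilon\in\{\pm 1\}$ that is locally constant on connected components of $\sr U_{X,0}^{\sigma,+}(r)$.

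The main obstacle will be to identify this sign as $(-1)^{h^0(E\otimes L)}$. For this I would unwind the Pfaffian construction of Proposition \ref{pfaffianetale} through Laszlo--Sorger's Proposition 7.9: the Pfaffian is built from a finite two-term complex $F_0\stackrel{q}{\lra}F_0^*$ representing $R\pi_*(\sr E_L)$ together with its symmetric self-duality, and a $\lambda$-scaling of the underlying sheaf acts on $\x{Pf}$ with weight equal to half its weight on $\det$, namely $\x{rank}(F_0)\bmod 2$. By semicontinuity this parity coincides with $h^0(E\otimes L)\bmod 2$, which vanishes on the component $\sr U_{X,0}^{\sigma,+}(r)$ by its very definition. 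Hence $\epsilon=+1$ there, the $\mu_2$-action on every fibre is trivial, and the descent criterion produces the desired line bundle on $\al U_{X,0}^{\sigma,+}(r)$.
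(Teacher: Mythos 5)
Your proposal follows essentially the same route as the paper: apply Kempf's descent lemma, identify the stabilizer of a stable $\sigma$-symmetric bundle as $\{\pm 1\}$, and observe that $-1$ acts on the Pfaffian fibre by $(-1)^{h^0(E\otimes L)}$ (the paper writes $(-1)^{h^1(E\otimes L)}$, which is the same since $\chi(E\otimes L)=0$), which is trivial on the even component. The only cosmetic differences are that the paper pins down this parity via the Hitchin-system computation of \cite[Theorem 4.12]{Z} where you invoke the definition of $\sr U_{X,0}^{\sigma,+}(r)$ directly, and that your intermediate justification of the weight on $\x{Pf}$ (``half its weight on $\det$, namely $\x{rank}(F_0)\bmod 2$'') is loosely phrased, though the conclusion you draw from it is the correct and standard one.
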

\begin{proof}
	The moduli space $\al{U}_X^{\sigma,+}(r)$ is constructed using GIT as a $\x{SL}(H)-$quotient of a parameter scheme $\x{Quot}^{\sigma}(\bb C)$, where $H=\bb C^{m}$ for some $m$ (see \cite{Z2}). Let $L\in \x{P}^{ev}$ %$\kappa$ be an even  theta characteristic on $X$
	 and  $a=(E,q,\overline{\psi})$ be a point of $\x{Quot}^\sigma(\bb C)$. Since $E$ is stable,  the stabilizer of $a$ under the action of $\x{SL}(H)$ is just $\{\pm1\}$. The action of this stabilizer on $(\al P_L)_a$ is by definition multiplication by $g^{h^1(E\otimes L)}$, for $g\in\{\pm1\}$. Since $\al{U}_{X,0}^{\sigma,+}(r)$ is connected, we have $$h^1(E\otimes L)=\begin{cases}1& \x{if}\;r\equiv 1\mod2\x{  and }L\in\x{P}^{od} \\ 0&\x{otherwise}.\end{cases}$$
	This can be shown using Hitchin system (cf. \cite[Theorem $4.12$]{Z}). 
	Since $L$ is even, it follows that $-1$ acts trivially on $(\al P_L)_a$, for any $a$.  Using Kempf's Lemma we deduce the result.
\end{proof}

Now we show the existence of the Pfaffian divisor. Let  $\al{U}_X(r,0)$ be  the moduli space  of rank $r$ and  degree $0$ stable vector bundles over $X$, and let $\Theta_L$ be the divisor in $\al U_X(r,0)$  supported on vector bundles $E$ such that $E\otimes L$ has a non-zero global section, where $L\in \x{P}^{ev}$ is fixed. 

Let us recall from \cite{Z} some basic results about the Hitchin system in this case. The Hitchin morphism on $\al{U}_X(r,0)$ induces a fibration $$\sr H:T^*\al U_X^{\sigma,+}(r)\lra W^{\sigma,+},$$ where $W^{\sigma,+}=\bigoplus H^0(K_X^i)_+$. For general $s\in W^{\sigma,+}$ the associated spectral curve $q:\tilde{X}_s\ra X$ is smooth and it has a fixed point free involution $\tilde{\sigma}$ that lifts $\sigma$. Moreover, the quotient curve $\tilde{Y}_s=\tilde{X}_s/\tilde{\sigma}$ is a smooth spectral curve over $Y$ with spectral data in $K_Y\otimes \Delta$. Let $S$ be the ramification divisor of $\tilde{Y}_s\ra Y$.  Then the fiber $\x{Nm}_{\tilde{X}/\tilde{Y}_s}^{-1}(\al{O}(S))$ has two connected components $\tilde{\x{P}}^{ev}\cup \tilde{\x{P}}^{od}$, distinguished by the parity of  $h^0(-\otimes q^*L)$ ($+$ for even), where $L$ is in $\x{P}^{ev}$.  Now by \cite[Theorem $4.17$]{Z} the push-forward map $$q_*:\tilde{\x{P}}^{ev}\cap \tilde{ \x{Q}}\dashrightarrow \al{SU}_X^{\sigma,+}(r)$$ is dominant, where $\tilde{ \x{Q}}=\x{Nm}_{\tilde{X}_s/X}^{-1}(\delta)$, $\delta=\x{det}(q_*\al O_{\tilde{X}_s})^{-1}$.

\begin{lemm}\label{effective}
	%There exists a theta characteristic $\kappa_0$ over $Y$, such that, if $\kappa=\pi^*\kappa_0$, then 
	Let $L\in{ \x{P}}^{ev}$. The  restrictions of the divisor $\Theta_{L}\subset \al U_X(r,0)$ to $\al{U}_{X,0}^{\sigma,+}(r)$ and $\al{SU}_X^{\sigma,+}(r)$ are again divisors. The associated reduced  divisors are denoted $\Xi_L$.%Moreover, there exists an effective divisor  $\Xi_L$ in $\al{U}_X^{\sigma,+}(r)$ such that  $\al O(\Xi_L)=\al{P}_{L}$ and  $$2\Xi_L=\Theta_L.$$
\end{lemm}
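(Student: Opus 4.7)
To prove that each restriction is a divisor it is enough to exhibit a \emph{single} stable $\sigma-$symmetric bundle $E$ in each moduli space with $H^0(X,E\otimes L)=0$. Indeed, $\Theta_L$ is the zero locus of a canonical section of a line bundle on $\al U_X(r,0)$; if this section does not vanish identically on the restricted moduli space, then its restriction defines a Cartier divisor there, which we then replace by its reduced structure to define $\Xi_L$.

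The natural source of such an $E$ is the Hitchin picture recalled just above. I would pick a general spectral datum $s\in W^{\sigma,+}$, so that $q:\tilde X_s\to X$ is smooth, carries a fixed-point-free involution $\tilde\sigma$ lifting $\sigma$, and has smooth quotient $\tilde Y_s\to Y$. Since the pushforward $q_*:\tilde{\x P}^{ev}\cap\tilde{\x Q}\dashrightarrow \al{SU}_X^{\sigma,+}(r)$ is dominant, it suffices to produce $M\in \tilde{\x P}^{ev}\cap\tilde{\x Q}$ with $H^0(\tilde X_s,M\otimes q^*L)=0$; the projection formula then yields $H^0(X,q_*M\otimes L)=0$, and $E=q_*M$ does the job. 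For the non-fixed-determinant variant $\al{U}_{X,0}^{\sigma,+}(r)$ one twists $E$ by suitable Prym line bundles, which preserve the even parity of $h^0(-\otimes L)$ on the distinguished component by Lemma \ref{descendpfaffian}.

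The key remaining point is generic vanishing of $H^0(\tilde X_s,M\otimes q^*L)$. The norm condition $\x{Nm}_{\tilde X_s/\tilde Y_s}(M)=\al O(S)$, combined with $L\in \x P^{ev}$, is arranged exactly so that $(M\otimes q^*L)^{\otimes 2}\cong K_{\tilde X_s}$, making $M\otimes q^*L$ a theta characteristic on $\tilde X_s$. By Mumford's parity theorem, $\dim H^0(\tilde X_s,M\otimes q^*L)$ is constant modulo $2$ on each connected component of the space of theta characteristics; on the component into which the translate $\tilde{\x P}^{ev}+q^*L$ lands this parity is $0$, so the non-vanishing locus is a proper closed subvariety and a general $M$ avoids it by upper semicontinuity of $h^0$.

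The main obstacle I anticipate is this last identification: one must check that the superscript $ev$ on $\tilde{\x P}^{ev}$, defined via the parity of $h^0(-\otimes q^*L)$ on the Prym side, agrees with the even-parity connected component of spin bundles on $\tilde X_s$ in Mumford's intrinsic sense. Unwinding this reduces to following how the spectral cover $q$ and the involution $\tilde\sigma$ interact with $K_{\tilde X_s}$ and $q^*L$, essentially the same compatibility used to show that $q_*$ actually lands in $\al{SU}_X^{\sigma,+}(r)$. Once that bookkeeping is in place, the lemma follows at once from semicontinuity and the dominance of $q_*$.
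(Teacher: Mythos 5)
Your opening reduction is the same as the paper's: it suffices to exhibit a single semistable $\sigma-$symmetric bundle with trivial determinant and $h^0(E\otimes L)=0$, since $h^0=0$ is open and $\Theta_L$ is the zero locus of a section of a line bundle. But the way you produce such a bundle has two genuine gaps. First, $M\otimes q^*L$ is not a theta characteristic on $\tilde X_s$: the constraint on $M$ is the norm condition $\x{Nm}_{\tilde X_s/\tilde Y_s}(M)=\al O(S)$, i.e.\ $M\otimes\tilde\sigma^*M$ is a pullback from $\tilde Y_s$, and $L\in\x{P}^{ev}$ gives $L\otimes\sigma^*L\cong K_X$, not $L^{2}\cong K_X$. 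So the correct statement is that $M\otimes q^*L$ lies in a norm fibre for the \'etale cover $\tilde X_s\to\tilde Y_s$, and the relevant parity result is Mumford's parity theorem for Prym varieties, not the one for spin bundles on $\tilde X_s$. Second, and more seriously, even with the correct parity statement the argument does not close: constancy of parity and generic vanishing of $h^0$ on the full component $\tilde{\x P}^{ev}$ do not imply generic vanishing on the proper subvariety $\tilde{\x P}^{ev}\cap\tilde{\x Q}$ through which $q_*$ dominates $\al{SU}_X^{\sigma,+}(r)$. That intersection could a priori be contained in the locus $\{h^0\geqslant 2\}$; ruling this out is precisely the assertion of the lemma transported to the spectral curve, so the reduction is circular unless you exhibit by hand one point of $\tilde{\x P}^{ev}\cap\tilde{\x Q}$ off the theta divisor --- which is the original problem again.

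The paper avoids the spectral curve entirely and uses a one-line construction: choose $\alpha\in\x{P}[r]$ with $h^0(\alpha\otimes L)=0$ (possible because $|r\Xi_L|$ on the Prym is base point free and $\x{P}[r]$ acts irreducibly on it, so the translates $T_\alpha^*\Xi_L$ have no common point) and set $E=\alpha^{\oplus r}$. This is visibly semistable, $\sigma-$symmetric (as $\sigma^*\alpha\cong\alpha^{-1}$ for $\alpha$ in the Prym), of trivial determinant since $\alpha$ is $r-$torsion, and $h^0(E\otimes L)=r\,h^0(\alpha\otimes L)=0$. I would recommend replacing your Hitchin-system argument with a direct construction of this kind.
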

\begin{proof}
	It is enough to produce a semistable $\sigma-$symmetric vector bundle with trivial determinant that does not belong to $\Theta_L$.	  Let $\Xi_L$ be the (principal) polarization on $\x{P}:=Prym(X\ra Y)$. Then the linear system $|r\Xi_L|$ is base point free for any $r\geqslant2$ and the group $\x{P}[r]$ acts irreducibly on it. Hence let $\alpha\in \x{P}[r]$ such that $\al O_X\not\in T_\alpha^*\Xi_L=\Xi_{\alpha\otimes L}$. In other words $h^0(\alpha\otimes L)=0$. Define $E:=\alpha^{\oplus r}$. It is obviously a semistable $\sigma-$symmetric  vector bundle with trivial determinant, hence it belongs to the closures of $\al{SU}_X^{\sigma,\pm}(r)$ and $\al U_{X,0}^{\sigma,+}(r)$  and it is not in the restriction of the divisor $\Theta_L$. 
\end{proof}

Note that the other connected component $\al{U}_{X,1}^{\sigma,+}(r)$ is entirely included in $\Theta_{L}$ for any $L\in\x{P}^{ev}$. For the moduli of $\sigma-$alternating bundles, the same happens, i.e. the restriction of $\Theta_{L}$ to $\al{U}_{X,0}^{\sigma,-}(r)$ is again a divisor and $\al{U}_{X,1}^{\sigma,-}(r)\subset \Theta_L$. \\

%Recall that we have constructed in \cite{Z} a dominant rational map $$q_*:\sr P'\dashrightarrow \al{U}_{X,0}^{\sigma,+}(r),$$ where $\sr P'$ is some translate of  a Prym variety of an \'etale double cover (a double cover of a spectral curve over $Y$) and $\al{U}_{X,0}^{\sigma,+}(r)$ is the connected component the locus of  $\sigma-$symmetric \emph{stable} vector bundles $E$ such that $\x{det}(E)\in \x{P}$, where $\x{P}$ is the Prym variety of $X\ra Y$. We fix an identification of $\sr P'$ with the Prym variety $\sr P$ of degree $0$ line bundles.\\ %Using the same method as Proposition \ref{pfaffian}, one can show that the determinant bundle over the stack $\sr{U}_X^{\sigma,+}(r)$ has a square root. Moreover, this square root descends to $\al{U}_{X,0}^{\sigma,+}(r)$, we denote it by $\al P_\kappa$. 
\begin{lemm}
We have 	$\x{dim}(H^0(\al{U}_{X,0}^{\sigma,+}(r),\al{P}_{ev}))=1.$
\end{lemm}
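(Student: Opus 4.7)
The plan is to prove $h^0(\al U_{X,0}^{\sigma,+}(r),\al P_{ev}) = 1$ by producing a canonical section to establish $h^0 \geqslant 1$, and by comparing with a principal polarization on a generic Hitchin fiber to establish $h^0 \leqslant 1$.

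For the lower bound, the idea is to use the theta divisor. Since $\al P_{ev}^{\otimes 2}$ is the restriction to $\al U_{X,0}^{\sigma,+}(r)$ of the determinant bundle $\al D_L = \al O_{\al U_X(r,0)}(\Theta_L)$, and Lemma \ref{effective} shows that the reduced support of $\Theta_L|_{\al U_{X,0}^{\sigma,+}(r)}$ is $\Xi_L$, it suffices to see that $\Theta_L$ restricts with multiplicity exactly $2$ along $\Xi_L$. This follows from a parity argument: on $\al U_{X,0}^{\sigma,+}(r)$ the quantity $h^0(E \otimes L)$ is always even for $L \in \x P^{ev}$ (cf.\ \cite[Theorem $4.12$]{Z}), so for a generic point of $\Xi_L$ the value of $h^0(E\otimes L)$ is at least $2$, forcing the contact multiplicity of $\Theta_L$ along $\Xi_L$ to be at least $2$; combined with $[\Theta_L|_{\al U_{X,0}^{\sigma,+}(r)}] = 2[\al P_{ev}]$ in $\x{Pic}$ and the primitivity of $\al P_{ev}$ (proved analogously to Proposition \ref{picardgroup}), the multiplicity is exactly $2$. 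Hence $\al O(\Xi_L) \cong \al P_{ev}$ admits a canonical nonzero section $s_{\Xi_L}$.

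For the upper bound, fix a generic point $s \in W^{\sigma,+}$ of the Hitchin base and consider the abelian variety $A_s = \tilde{\x P}_s^{ev} \cap \tilde{\x Q}_s$ together with the dominant rational spectral pushforward $q_*: A_s \dashrightarrow \al{SU}_X^{\sigma,+}(r) \subset \al U_{X,0}^{\sigma,+}(r)$ supplied by \cite[Theorem $4.17$]{Z}. The computation in the proof of Proposition \ref{picardgroup} identifies $(q_*)^* \al D_L$ with the natural polarization on the Prym intersection; the parity choice $L \in \x P^{ev}$ together with the restriction to the even component $\tilde{\x P}_s^{ev}$ makes this polarization twice a principal polarization on $A_s$ (this is precisely what permits $\al P_{ev}$ to exist even though $\al D$ has no square root on all of $\al{SU}_X^{\sigma,+}(r)$). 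Consequently $(q_*)^* \al P_{ev}$ is a principal polarization, so $h^0(A_s, (q_*)^* \al P_{ev}) = 1$. To conclude, one checks that $(q_*)^* s_{\Xi_L} \neq 0$, for instance by exhibiting a spectral line bundle in $A_s$ whose pushforward avoids $\Xi_L$; dominance of $q_*$ then makes the pullback $(q_*)^* : H^0(\al U_{X,0}^{\sigma,+}(r), \al P_{ev}) \ra H^0(A_s, (q_*)^*\al P_{ev})$ injective, yielding $h^0 \leqslant 1$.

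The main obstacle is the precise identification of $(q_*)^*\al P_{ev}$ as a \emph{principal} polarization, not a multiple thereof. This requires matching the parity decomposition $\tilde{\x P} = \tilde{\x P}^{ev} \sqcup \tilde{\x P}^{od}$ on the spectral side with the parity decomposition $\al U_X^{\sigma,+}(r) = \al U_{X,0}^{\sigma,+}(r) \sqcup \al U_{X,1}^{\sigma,+}(r)$ on the moduli side, i.e.\ verifying that a generic spectral datum in $\tilde{\x P}^{ev}_s$ pushes forward to the even component of $\al U_X^{\sigma,+}(r)$. A secondary (routine) technical point is that $q_*$ is only a rational map, so the pullback on global sections must be set up on the indeterminacy-free locus and extended by Hartogs-type reasoning; this is standard but warrants a line of justification.
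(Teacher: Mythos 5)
Your lower bound is essentially the paper's: both arguments produce a section of $\al P_{ev}$ from the divisor $\Xi_L$ of Lemma \ref{effective} satisfying $2\Xi_L=\Theta_L|_{\al U_{X,0}^{\sigma,+}(r)}$, and your parity/multiplicity discussion is a reasonable (indeed more detailed) justification of that relation, modulo the caveat that $2\Xi_L=\Theta_L|$ only gives $\al O(\Xi_L)\cong\al P_{ev}\otimes\alpha$ with $\alpha$ two-torsion, and $\x{Pic}(\al U_{X,0}^{\sigma,+}(r))$ (unlike the Picard group of the stack with fixed trivial determinant) is not obviously torsion-free.

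The upper bound, however, has a genuine error: you are pulling back to the wrong abelian variety. The lemma concerns $\al U_{X,0}^{\sigma,+}(r)$, not $\al{SU}_X^{\sigma,+}(r)$, and the Prym intersection $A_s=\tilde{\x P}^{ev}_s\cap\tilde{\x Q}_s$ dominates only the proper subvariety $\al{SU}_X^{\sigma,+}(r)\subset\al U_{X,0}^{\sigma,+}(r)$ (its dimension matches $\x{dim}\,\al{SU}$, not $\x{dim}\,\al U$). A dominant map onto a proper subvariety does not give injectivity of $H^0(\al U_{X,0}^{\sigma,+}(r),\al P_{ev})\ra H^0(A_s,(q_*)^*\al P_{ev})$: a section could vanish identically on $\al{SU}_X^{\sigma,+}(r)$ without vanishing on $\al U_{X,0}^{\sigma,+}(r)$. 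Worse, the key claim that $(q_*)^*\al P_{ev}$ is a principal polarization on $A_s$ is false: it is the restriction of the principal polarization $\tilde{\al L}$ of $\tilde{\x P}^{ev}$ to the complementary abelian subvariety $\tilde{\x P}^{ev}\cap\tilde{\x Q}$, and by the complementary-pair duality of \cite[Proposition $2.4$]{BNR} one has $h^0(\tilde{\x P}^{ev}\cap\tilde{\x Q},\tilde{\al L})=h^0(\x P^{ev},\al L^r)=r^{g_Y-1}$, which is exactly what Theorem \ref{PSD} computes for $\al{SU}_X^{\sigma,+}(r)$ --- not $1$. The paper's argument instead uses the \emph{full} torsor $\tilde{\x P}^{ev}$ (the even component of the Hitchin fiber, whose dimension equals that of $\al U_{X,0}^{\sigma,+}(r)$): the locus $\al S\subset\tilde{\x P}^{ev}$ where the pushforward is stable has complement of codimension at least $2$, the map $q_*:\al S\ra\al U_{X,0}^{\sigma,+}(r)$ is dominant, and $\al P_{ev}$ pulls back to the principal polarization $\tilde{\al L}$ on $\tilde{\x P}^{ev}$ itself, giving $h^0\leqslant h^0(\tilde{\x P}^{ev},\tilde{\al L})=1$. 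Your ``main obstacle'' paragraph is therefore aimed at establishing something that cannot be established; the fix is to replace $A_s$ by $\tilde{\x P}^{ev}$ throughout the upper-bound argument.
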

\begin{proof}
	Let $q:\tilde{X}_s\ra X$ be a smooth spectral curve over $X$ attached to a general $s\in W^{\sigma,+}$ (see \cite{Z} for more details and notations).  First, for some positive integer $m$, the pullback of the determinant bundle via $q_*:J_{\tilde{X}_s}^m\ra \al{U}_X(r,0)$ is the line bundle $\al O(\Theta_{q^*\kappa})$ attached to the Riemann theta divisor $\Theta_{q^*\kappa}$ over $J_{\tilde{X}_s}^m$ (modulo a translation).  Let $\al S\subset \tilde{\x{P}}^{ev}$ be the locus of line bundles $L$ such that $q_*L$ is stable. One can show (by restricting every thing from \cite{BNR}) that the codimension of the complement of $\al S$ is at least $2$. Since $q_*:\al 
	S\lra \al{U}_{X,0}^{\sigma,+}(r)$ is dominant, we get an injection $$H^0(\al{U}_{X,0}^{\sigma,+}(r),\al P_L)\hookrightarrow H^0(\tilde{\x{P}}^{ev}, \tilde{\al L}),$$ where $\tilde{\al L}$ is the principle polarization on $\tilde{\x{P}}^{ev}$. So $h^0(\al{U}_{X,0}^{\sigma,+}(r),\al P_L)$ is at most $1$. \\
		Now by Lemma \ref{effective}, there is an effective divisor $\Xi_L$ such that $2\Xi_L=\Theta_L|_{\al U_{X,0}^{\sigma,+}(r)}$.  In particular,  $\al{P}_L$ has a non trivial global section.
	%
	%Now assume that $\kappa$ is the pullback of an even theta characteristic $\kappa_0$ on $Y$, and let $\sr U$ be the universal bundle over $\sr{U}_{X,0}^{\sigma,+}(r)\times X$,  then the substack given by  $\Theta_\kappa:=\x{div}(R{p_1}_*(\pi_*(\sr U\otimes p_2^*\kappa)))$ over  $\sr{U}_{X,0}^{\sigma,+}(r)$ is a divisor. To see this, let $\kappa'$ be a non-effective theta characteristic over $Y$ such that $\kappa'\otimes \Delta$ is also non-effective, and let $F=(\kappa'\otimes\kappa_0^{-1})^{\oplus r}$, then clearly $\pi^*F\otimes \kappa$ is not in the support of $\Theta_\kappa$.  Moreover,
\end{proof}

Denote by $\al L$ and $\tilde{\al L}$ line bundles defining principal polarizations on $\x{P}^{ev}$ and $\tilde{\x{P}}^{ev}$ respectively. %, where $\x{P}^{ev}$ is the connected component of $\x{Nm}^{-1}(K_Y)$ consisting of line bundles $L$ such that $h^0(L)$ is even. 
    The restriction of  $\al P_{ev}$ to $\al{SU}_X^{\sigma,+}(r)$ is denoted again by $\al P_{ev}$.
\begin{theo}\label{PSD}
	We have an isomorphism $$ H^0(\x{P}^{ev},\al L^r)^*\cong H^0(\al{SU}_X^{\sigma,+}(r),\al{P}_L).$$ In particular we deduce $$\x{dim}(H^0(\al{SU}_X^{\sigma,+}(r),\al P_L))=r^{g_Y-1}.$$ 
\end{theo}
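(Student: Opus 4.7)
The plan is a strange-duality argument adapted to the Pfaffian setting. The dimension assertion follows from the structural isomorphism: in the unramified case the Hurwitz formula gives $g_X=2g_Y-1$, so $\x P^{ev}$ is an abelian variety of dimension $g_Y-1$, and Riemann--Roch on abelian varieties applied to the principal polarization $\al L$ yields $h^0(\x P^{ev},\al L^r)=r^{g_Y-1}$.

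To construct the isomorphism, consider the Pfaffian of the relative derived pushforward of the universal $\sigma-$symmetric bundle twisted by a Poincar\'e line bundle on $X\times \x P^{ev}$: by Proposition \ref{pfaffianetale} it is a square root of the determinant of cohomology on $\al{SU}_X^{\sigma,+}(r)\times \x P^{ev}$ and defines a canonical section $s_\Delta$ vanishing on the incidence divisor $\Delta=\{(E,M)\,:\,h^0(E\otimes M)>0\}$. The class of its line bundle is identified by the see-saw principle: restriction to $\{E\}\times \x P^{ev}$ is the Pfaffian square root of the pull-back of the classical rank-$r$ theta divisor through the Prym, lying in $|\al L^r|$; restriction to $\al{SU}_X^{\sigma,+}(r)\times \{M\}$ is the Pfaffian divisor $\Xi_M\cong \Xi_L$ from Lemma \ref{effective}, lying in $|\al P_L|$. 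Thus $s_\Delta\in H^0(\al P_L\boxtimes \al L^r)$, and K\"unneth together with the non-vanishing supplied by Lemma \ref{effective} yield a non-zero linear map
$$SD:H^0(\x P^{ev},\al L^r)^*\lra H^0(\al{SU}_X^{\sigma,+}(r),\al P_L).$$

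It remains to show $SD$ is an isomorphism. Injectivity follows from base-point-freeness of $|\al L^r|$ on the abelian variety $\x P^{ev}$ for $r\geqslant 2$, so that no non-zero linear functional on $H^0(\x P^{ev},\al L^r)$ simultaneously annihilates all slices $s_\Delta|_{\{E\}\times \x P^{ev}}$ as $E$ varies. For the matching upper bound $h^0(\al{SU}_X^{\sigma,+}(r),\al P_L)\leqslant r^{g_Y-1}$, I pull the Pfaffian back along the dominant push-forward $q_*:\tilde{\x P}^{ev}\cap \tilde{\x Q}\dashrightarrow \al{SU}_X^{\sigma,+}(r)$ from \cite[Theorem $4.17$]{Z}, obtaining an injection of $H^0(\al P_L)$ into sections of the principal polarization $\tilde{\al L}$ restricted to the abelian subvariety $\tilde{\x P}^{ev}\cap \tilde{\x Q}$. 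The main obstacle I foresee is precisely this identification: matching $\tilde{\al L}|_{\tilde{\x P}^{ev}\cap \tilde{\x Q}}$ with the $r$-th power of a principal polarization on an abelian variety of dimension $g_Y-1$ through the spectral-cover isogeny requires delicate Prym-on-Prym bookkeeping.
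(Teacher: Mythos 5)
Your overall strategy --- encode the duality in a canonical section of $\al P_L\boxtimes\al L^r$ cutting out the incidence divisor, and bound the dimension by pulling back along the dominant spectral pushforward $q_*:\tilde{\x{P}}^{ev}\cap\tilde{\x{Q}}\dashrightarrow\al{SU}_X^{\sigma,+}(r)$ --- is close in spirit to the paper's argument, but two steps do not go through as written. The first is the injectivity of your map $SD:H^0(\x{P}^{ev},\al L^r)^*\ra H^0(\al{SU}_X^{\sigma,+}(r),\al P_L)$. Writing $s_\Delta=\sum_i a_i\otimes b_i$ minimally, $SD(\phi)=\sum_i\phi(b_i)a_i$ vanishes exactly when $\phi$ annihilates the span of the slices $s_\Delta|_{\{E\}\times\x{P}^{ev}}$, i.e.\ the span of the divisors $\rho(E)$ inside $H^0(\x{P}^{ev},\al L^r)$. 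Base-point-freeness of the \emph{complete} linear system $|\al L^r|$ says nothing about whether this subspace is proper: the slices could perfectly well lie in a hyperplane. The ingredient that actually closes this is the one the paper uses, namely that $\x{P}[r]$ acts on everything compatibly and $H^0(\x{P}^{ev},\al L^r)$ is an irreducible (projective) representation of the associated Heisenberg group, so any nonzero invariant subspace --- such as the span of the slices, or the image of the transpose map --- is the whole space. Without invoking this irreducibility (or an equivalent spanning statement), injectivity of $SD$ is unproved.

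The second gap is one you flag yourself but do not resolve: the upper bound requires identifying $H^0\bigl(\tilde{\x{P}}^{ev}\cap\tilde{\x{Q}},\tilde{\al L}|_{\tilde{\x{P}}^{ev}\cap\tilde{\x{Q}}}\bigr)$ with a space of dimension $r^{g_Y-1}$, and this is precisely where the paper invokes \cite[Proposition $2.4$]{BNR}: $\x{P}^{ev}$ and $\tilde{\x{P}}^{ev}\cap\tilde{\x{Q}}$ are (torsors under) a complementary pair of abelian subvarieties of the principally polarized $\tilde{\x{P}}^{ev}$, whence a canonical isomorphism $H^0(\tilde{\x{P}}^{ev}\cap\tilde{\x{Q}},\tilde{\al L})\cong H^0(\x{P}^{ev},\al L^r)^*$. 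Deferring this as ``delicate Prym-on-Prym bookkeeping'' leaves the dimension count open, since a priori the restriction of $\tilde{\al L}$ need not be the $r$-th power of a principal polarization. With that duality in hand the paper's route is cleaner than yours: it gets an injection $H^0(\al{SU}_X^{\sigma,+}(r),\al P_{ev})\hookrightarrow H^0(\x{P}^{ev},\al L^r)^*$ from dominance of $q_*$ plus the complementary-pair duality, checks the source is nonzero (your Lemma \ref{effective}), and concludes by $\x{P}[r]$-equivariance and irreducibility --- the same two ingredients your write-up is missing. Your Riemann--Roch computation of $h^0(\x{P}^{ev},\al L^r)=r^{g_Y-1}$ is correct and matches the paper's conclusion.
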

\begin{proof}
	%Consider the surjective  map $${ \al{SU}_X^{\sigma,+}(r)\times \x{P}\lra  \al{U}_{X,0}^{\sigma,+}(r), }$$ %\xymatrix{\sr{P}\cap \sr Q \times \x{P}\ar[rr]\ar[d] & & \sr P\ar[d]\\
	%where $\sr Q$ is the Prym variety of $\tilde{X}_s\ra X$. Using  \cite{BNR}, Theorem $3$, we deuce that  the pullback of the line bundle $\al O(\Xi_\kappa)$ via the above map is of the form  $p_1^*\al P\otimes p_2^*\al O(r\Xi)$, for some Pffafian bundle $\al P$ over $\al{SU}_X^{\sigma,+}(r)$. %Now  if $\sr S\subset \sr P\cap \sr Q$ is the locus of line bundles $L$ such that $q_*L$ is stable, then the codimension  of the complement of $\sr S$ is at least $2$, since $\sr S\lra \al{SU}_X^{\sigma,+}(r)$ is dominant, we deduce an injection $$H^0(\al{SU}_X^{\sigma,+}(r), \al P)\ra H^0(\sr P\cap \sr Q,\al L),$$ where $\al L$ is the restriction of the principal polarisation $\tilde{\Xi}$ of $\sr P$ to $\sr P\cap \sr Q$.
	%Moreover we have an isomorphism $$H^0(\sr P\cap \sr Q,\al L)\otimes H^0(\x{P},r\Xi)\stackrel{\sim}{\lra} H^0(\sr P,\tilde{\Xi})\cong \bb C. $$ 
	%Moreover we have an isomorphism $$H^0(\al{SU}_X^{\sigma,+}(r),\al P)\otimes H^0(\x{P},r\Xi)\stackrel{\sim}{\lra} H^0(\al{U}_{X,0}^{\sigma,+}(r),\al O(\Xi_\kappa))\cong \bb C. $$ This implies that $$H^0(P,r\Xi)^*\cong  H^0(\al{SU}_X^{\sigma,+}(r),\al P).$$
	Consider the following commutative diagram $$\xymatrix{\tilde{\x{P}}^{ev}\cap \tilde{ \x{Q}} \times \x{P}^{ev}\ar[rr]\ar[d] & & \tilde{\x{P}}^{ev}\ar[d]\\ \al{SU}_X^{\sigma,+}(r)\times \x{P}^{ev}\ar[rr] &&  \al U_{X,0}^{\sigma,+}(r,K_X), }$$
	where $\al U_{X,0}^{\sigma,+}(r,K_X)=\{E\otimes L\;|\;E\in\al U_{X,0}^{\sigma,+}(r)\}$, it has a canonical Pfaffian line bundle $\al P$. 	Using  \cite[ Theorem $3$]{BNR}, we deduce that  the pullback of the line bundle $\al P$ to $\al{SU}_X^{\sigma,+}(r)\times \x{P}^{ev}$ is isomorphic to  $p_1^*\al P_{ev}\otimes p_2^*\al L^r$.\\ 
	Now the rational map $\tilde{\x{P}}^{ev}\cap \tilde{ \x{Q}} \lra \al{SU}_X^{\sigma,+}(r)$ is dominant (\cite[Theorem $4.16$]{Z}). It follows, by the same argument used in the proof above, that the map $$H^0(\al{SU}_X^{\sigma,+}(r), \al P_{ev})\rightarrow H^0(\tilde{\x{P}}^{ev}\cap  \tilde{ \x{Q}}, \tilde{\al L})$$ is injective,	where here we denote abusively by $\tilde{\al L}$ the restriction of $\tilde{\al L}$ to $\tilde{P}^{ev}\cap \tilde{ \x{Q}}\subset\tilde\tilde{ \x{P}}^{ev}$. Since the two subvarieties $\x{P}^{ev}$ and $\tilde{\x{P}}^{ev}\cap \tilde{ \x{Q}}$ are (torsors under) complementary pair inside $\tilde{\x{P}}^{ev}$, we obtain,  using \cite[Proposition $2.4$]{BNR},  an isomorphism $$H^0(\tilde{\x{P}}^{ev}\cap \tilde{ \x{Q}}, \tilde{\al L})\cong H^0(\x{P}^{ev},\al L^r)^*.$$
	Hence we deduce an injective map $$H^0(\al{SU}_X^{\sigma,+}(r),\al P_{ev})\hookrightarrow H^0(\x{P}^{ev},\al L^r)^*.$$
	%Now  if $\sr S\subset \sr P\cap \sr Q$ is the locus of line bundles $L$ such that $q_*L$ is stable, then the codimension  of the complement of $\sr S$ is at least $2$, since $\sr S\lra \al{SU}_X^{\sigma,+}(r)$ is dominant, we deduce an injection $$H^0(\al{SU}_X^{\sigma,+}(r), \al P)\ra H^0(\sr P\cap \sr Q,\al L),$$ where $\al L$ is the restriction of the principal polarization $\tilde{\Xi}$ of $\sr P$ to $\sr P\cap \sr Q$.
	%Moreover we have an isomorphism $$H^0(\sr P\cap \sr Q,\al L)\otimes H^0(\x{P},r\Xi)\stackrel{\sim}{\lra} H^0(\sr P,\tilde{\Xi})\cong \bb C. $$ 
	%Moreover we have an isomorphism $$H^0(\al{SU}_X^{\sigma,+}(r),\al P)\otimes H^0(\x{P},r\Xi)\stackrel{\sim}{\lra} H^0(\al U_X^{\sigma,+,0}(r),\al O(\tilde{\Theta}_\kappa))\cong \bb C. $$ 
	Moreover the group $\x{P}[r]$ acts on  $\bb{P}H^0(\x{P}^{ev},\al L^r)^*$ as well as on $\al{SU}_X^{\sigma,+}(r)$, hence it acts also on  the linear system $\bb PH^0(\al{SU}_X^{\sigma,+}(r),\al P_{ev})$. Since the projective  representation $\bb PH^0(\x{P}^{ev},\al L^r)^*$ is irreducible, the  map $H^0(\al{SU}_X^{\sigma,+}(r),\al P_{ev})\hookrightarrow H^0(\x{P}^{ev},\al L^r)^*$, which is equivariant for these actions, is necessarily an isomorphism.
\end{proof}
Note that we have a map $$\rho:\al{SU}_X^{\sigma,+}(r)\dashrightarrow |\al{L}^r|=\bb PH^0(\x{P}^{ev},\al{L}^r),$$ given by $$E\lra \rho(E)=\x{divPf}(\pi_*(p_2E\otimes\sr K)),$$
where  $\sr K$ is the normalized  Poincar\'e bundle over $\x{P}^{ev}$ such that $\sigma^*\sr K\simeq\sr  K^{-1}\otimes p_2^*K_X$. Note that the family $\pi_*(p_2^*E\otimes \sr K)$ has a non-degenerated quadratic form with values in $K_Y$. Note that $\rho^*\al O(1)\cong \al P_{ev}$. So this map and the duality of complimentary pairs induce the following commutative diagram $$\xymatrix{H^0(\x{P}^{ev},\al{L}^r)^*\ar[rr]\ar[rrd]&& H^0(\al{SU}_X^{\sigma,+}(r),\al P_{ev})\ar[d] \\ & &H^0(\tilde{ \x{P}}^{ev}\cap \tilde{ \x{Q}},\tilde{\al L}),  }$$ 
where all maps in the above diagram are isomorphisms. In other words, the isomorphism of Theorem \ref{PSD} is exactly $\rho^*$. 
\vspace{1cm}

\bibliographystyle{alpha}
\bibliography{bib}
\end{document}